\theoremstyle{plain}
\newtheorem{theorem}{Theorem}[section]
\newtheorem{lemma}{Lemma}[section]
\newtheorem{definition}{Definition}[section]
\newtheorem{conjecture}{Conjecture}[section]
\begin{document}
\title{A note on the image of polynomials and Waring type problems on upper triangular matrix algebras}
\subjclass[2020]{16S50, 15A54}
\keywords{Lvov-Kaplansky conjecture, polynomial, upper triangular matrix algebra, algebraically closed field, Zariski topology}
\thanks{}
\maketitle
\begin{center}
Qian Chen\\
Department of Mathematics, Shanghai Normal University,
Shanghai 200234, China.\\
Email address: qianchen0505@163.com
\end{center}
\maketitle

\begin{abstract}
In the present paper we shall obtain a result on the image of polynomials with zero constant term on upper triangular matrix algebras over an algebraically closed field. This is a supplement to a result obtained by Panja and Prasad recently.
\end{abstract}

\section{Introduction}
Let $K$ be a field. Let $m\geq 1$ be an
integer and let $n\geq 2$ be an integer. By $K\langle x_1,\ldots,x_m\rangle$ we denote the free
$K$-algebra generated by non-commuting variables $x_1,\ldots,x_m$
and refer to the elements of $K\langle x_1,\ldots,x_m\rangle$ as
polynomials.

The old and famous Lvov-Kaplansky conjecture asserts:

\begin{conjecture}\cite{Do}\label{Con1}
Let $p$ be a multilinear polynomial. Then the set of values
of $p$ on the matrix algebra $M_n(K)$ over a field $K$ is a vector space.
\end{conjecture}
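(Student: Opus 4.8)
Since the statement is the Lvov-Kaplansky conjecture, which is open in general, the most I can propose is the standard reduction together with a description of where it stalls. Set $\mathrm{Im}(p) = \{\,p(A_1,\dots,A_m) : A_i \in M_n(K)\,\}$. Two properties of this value set are immediate from multilinearity of $p$: substituting $A_1 \mapsto \lambda A_1$ shows $\mathrm{Im}(p)$ is closed under scalar multiples, and $A_1 \mapsto 0$ shows $0 \in \mathrm{Im}(p)$; moreover, since conjugation by $g \in \mathrm{GL}_n(K)$ is an algebra automorphism, $g\,p(A_1,\dots,A_m)\,g^{-1} = p(gA_1g^{-1},\dots,gA_mg^{-1})$, so $\mathrm{Im}(p)$ is a union of $\mathrm{GL}_n(K)$-conjugacy classes.

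The first step is representation-theoretic: over an algebraically closed field the only conjugation-invariant linear subspaces of $M_n(K)$ are $\{0\}$, the scalars $K I_n$, the trace-zero matrices $\mathfrak{sl}_n(K)$, and $M_n(K)$ (via irreducibility of the appropriate subquotients of the adjoint representation). Hence the conjecture is \emph{equivalent} to the statement that $\mathrm{Im}(p)$ is a linear subspace --- once that is known, conjugation-invariance pins it down to one of these four. The second step is to determine, for a given $p$, which of the four is the target: $p$ a polynomial identity of $M_n(K)$ gives $\{0\}$; $p$ central but not an identity gives $K I_n$; $\mathrm{tr}\,p$ an identity with $p$ not central gives $\mathfrak{sl}_n(K)$; and the remaining (main) case should give $M_n(K)$. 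In that main case one checks that $\mathrm{Im}(p)$ meets the regular semisimple locus, so that --- being conjugation-invariant --- it contains a Zariski-dense subset of $M_n(K)$ (resp. $\mathfrak{sl}_n(K)$), and, being the image of a morphism of affine varieties, it is a constructible set containing a dense open subset of the target.

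The main obstacle is the final step: upgrading \emph{``$\mathrm{Im}(p)$ contains a dense open subset of $\mathfrak{sl}_n(K)$''} to \emph{``$\mathrm{Im}(p) = \mathfrak{sl}_n(K)$''}. Density comes cheaply; equality does not, because the non-generic matrices --- those with repeated eigenvalues, in particular nilpotents and non-semisimple ones --- must be realized as values individually, and no uniform mechanism for this is known. For $n=2$ one finishes by an explicit case analysis: put a prospective value in Jordan form and exhibit preimages conjugacy class by conjugacy class, the low dimension keeping the bookkeeping finite; with considerably more work this has been pushed to $n = 3$, but the argument does not obviously extend to larger $n$, which is why the conjecture remains open there. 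For the purposes of the present note, only the $n=2$ instance of the conjecture, together with the conjugation-invariance observation above, will be needed, since the results here concern upper triangular matrix algebras, where the analogue is far more tractable.
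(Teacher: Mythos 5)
The statement you were asked about is Conjecture \ref{Con1}, the Lvov--Kaplansky conjecture; the paper states it without proof because it is an open problem, cited only as motivation for the work on upper triangular matrices. You correctly recognized this, so there is no proof in the paper to compare against, and your refusal to manufacture one is the right call. Your survey of the standard reduction is essentially accurate: multilinearity gives closure under scalars and $0\in \mathrm{Im}(p)$, conjugation-invariance holds, and (over an algebraically closed field of characteristic not dividing $n$) the only $\mathrm{GL}_n$-invariant subspaces are $\{0\}$, $KI_n$, $\mathfrak{sl}_n(K)$, and $M_n(K)$, so the conjecture amounts to showing the image is one of these four. Two small corrections: the $n=2$ case is resolved by Kanel-Belov, Malev, and Rowen only over quadratically closed fields, and the $n=3$ case is \emph{not} fully resolved --- only the trace-vanishing multilinear case is described in \cite{Rowen2}, so ``pushed to $n=3$'' overstates what is known. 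Also, the present paper never actually invokes the conjecture (not even the $n=2$ instance); its results on $T_n(K)$ are proved independently.
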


The images of multilinear polynomials of degree up to four on matrix algebras have been discussed (see \cite{Albert,BW,Mes,Sho}).

For an arbitrary polynomial with zero constant term, the question was discussed for the case when $K$ is a
finite field by Chuang \cite{Chuang}. Later Chuang's result was generalized by Kulyamin \cite{Kul1,Kul2} for graded algebras.

In 2012 Kanel-Belov, Malev, and Rowen \cite{Rowen1} gave a complete
description of the image of semi-homogeneous polynomials on the
algebra of $2\times 2$ matrices  over a quadratically closed field. As a consequence, they solved Conjecture \ref{Con1} for $n=2$ (see
\cite[Theorem 2]{Rowen1}). In 2016 they gave a complete description of the image of a multilinear polynomial which is trace vanishing
on $3\times 3$ matrices over a field $K$ (see \cite[Theorem 4]{Rowen2}). Some results related to Conjecture \ref{Con1}
have been obtained (see \cite{Rowen4,Dyk,DCM, Malev1, Malev2,Vitas1}). In 2020 Bre\v{s}ar \cite{Bre} presented some rough approximate versions of Conjecture \ref{Con1}, or
at least as an attempt to approach this conjecture from a different perspective.

There has been a growing interest about the Waring problem for matrix
algebras. For example, Bre\v{s}ar and \v{S}emrl \cite{B1} proved that any traceless matrix can be written as sum of two
matrices from $f(M_n(C))-f(M_n(C))$, where $C$ is the complex field and $f$ is neither an identity nor a central polynomial for
$M_n(C)$. Recently, they have also proved that if $\alpha_1, \alpha_2,\alpha_3\in C\setminus\{0\}$ and
$\alpha_1+\alpha_2+\alpha_3=0$, then any traceless matrix over $C$ can be written as $\alpha_1A_1+\alpha_2A_2+\alpha_3A_3$, where $A_i\in f(M_n(C))$ (see \cite{B2}).

In attempts to approach Conjecture \ref{Con1}, some
variations of it have been studied extensively. For example, the images of
multilinear polynomials of small degree on Lie Algebras \cite{An,SP}
and Jordan Algebras \cite{Ma} have been discussed. In 2021, Malev \cite{Malev2} gave a complete description of the images of multilinear polynomials evaluated on the quaternion algebra. In 2021, Vitas \cite{Vitas2} proved for any nonzero multilinear polynomial $p$,
that if $\mathcal{A}$ is an algebra with a surjective inner derivation, such as the Weyl algebra, then $p(\mathcal{A})=\mathcal{A}$.
Recently, Kanel-Belov, Malev, Pines, and Rowen \cite{Rowen2022} investigated the images of multilinear and semihomogeneous polynomials on the
algebra of octonions. For the most recent results on images of polynomials on finite dimensional algebras we recommend the survey paper \cite{survey}.

By $T_n(K)$ we denote the set of all $n\times n$ upper triangular
matrices over $K$. By $T_n(K)^{(0)}$ we denote the set of all
$n\times n$ strictly upper triangular matrices over $K$. More generally, if $t\geq 0$, the set of all upper triangular
matrices whose entries $(i,j)$ are zero, for $j- i\leq t$, will be
denoted by $T_n(K)^{(t)}$. It is easy to check that $J=T_n(K)^{(0)}$ and $J^t=T_n(K)^{(t-1)}$, where $t\geq 1$ and $J$ is the Jacobson radical of $T_n(K)$ (see \cite[Example 5.58]{Bre1}).

In 2019 Fagundes \cite{Fag} gave a complete description of the images
of multilinear polynomials on $T_n(K)^{(0)}$. In the same year, Fagundes and Mello \cite{FM} discussed the images
of multilinear polynomials of degree up to four on $T_n(K)$. They proposed the following important variation of Conjecture \ref{Con1}:

\begin{conjecture}\cite[Conjecture 1]{FM}\label{Con}
The image of a multilinear polynomial over a field $K$ on the upper
triangular matrix algebra $T_n(K)$ is always a vector space.
\end{conjecture}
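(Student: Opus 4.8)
The plan is to prove the following sharper statement, which implies Conjecture \ref{Con} because every $T_n(K)^{(t)}$ is a linear subspace of $T_n(K)$. Write $p=\sum_{\sigma\in S_m}a_\sigma x_{\sigma(1)}\cdots x_{\sigma(m)}$; for $p\neq 0$ set
\[
\ell(p)=\min\{\ell\ge 1:\ p\notin\operatorname{Id}(T_\ell(K))\},
\]
where $\operatorname{Id}$ denotes the ideal of polynomial identities. This is finite: a nonzero multilinear polynomial of degree $m$ cannot be an identity of $T_\ell(K)$ once $2\ell>m$, since $\operatorname{Id}(T_\ell(K))$ has no nonzero multilinear element of degree less than $2\ell$ (and multilinear identities are unaffected by extension of the base field). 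The claim is
\[
p(T_n(K))=T_n(K)^{(\ell(p)-2)},\qquad\text{with }T_n(K)^{(-1)}:=T_n(K)
\]
(and $p(T_n(K))=\{0\}$ when $p=0$). The case $\ell(p)=1$, i.e.\ $c:=\sum_\sigma a_\sigma\neq 0$, is immediate, since $p(x_1,1,\ldots,1)=c\,x_1$ gives $p(T_n(K))\supseteq c\,T_n(K)=T_n(K)$. So assume $\ell:=\ell(p)\ge 2$; then $p$ vanishes on all diagonal matrices, hence $p(T_n(K))\subseteq J=T_n(K)^{(0)}$.

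For the inclusion $p(T_n(K))\subseteq T_n(K)^{(\ell-2)}$ I would use the \emph{window identity}: for $1\le i$, $d\ge 0$, $i+d\le n$,
\[
\bigl(p(A_1,\ldots,A_m)\bigr)_{i,\,i+d}=\bigl(p(\widehat A_1,\ldots,\widehat A_m)\bigr)_{1,\,d+1},
\]
where $\widehat A_r\in T_{d+1}(K)$ is the submatrix of $A_r$ on rows and columns $i,i+1,\ldots,i+d$; indeed, for a single word every entry inside the block $[i,i+d]$ is computed inside that block, and one sums over the permutations. If $d+1\le\ell-1$ then $p\in\operatorname{Id}(T_{d+1}(K))$, so this entry vanishes identically; thus $p(T_n(K))$ is supported on positions $(i,j)$ with $j-i\ge\ell-1$, i.e.\ $p(T_n(K))\subseteq T_n(K)^{(\ell-2)}$.

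The substantive half is $p(T_n(K))\supseteq T_n(K)^{(\ell-2)}$, and I would prove it by induction on $n$. If $n<\ell$ both sides are $\{0\}$. If $n=\ell$, the window identity confines $p(T_\ell(K))$ to the entry $(1,\ell)$; as $A_1\mapsto(p(A_1,\ldots,A_m))_{1,\ell}$ is $K$-linear, the attained set is $\{0\}$ or $K$, and it is not $\{0\}$ because $p\notin\operatorname{Id}(T_\ell(K))$, so $p(T_\ell(K))=Ke_{1,\ell}=T_\ell(K)^{(\ell-2)}$. For $n>\ell$: the two corner homomorphisms $\pi,\pi':T_n(K)\to T_{n-1}(K)$, deleting the last (resp.\ first) row and column, give $\pi(p(T_n(K)))=\pi'(p(T_n(K)))=p(T_{n-1}(K))=T_{n-1}(K)^{(\ell-2)}$ by the inductive hypothesis, which together with the window identity determines $p(T_n(K))$ save for which last columns occur over a prescribed $\pi$-value. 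So, given $Y\in T_n(K)^{(\ell-2)}$, choose $\vec B\in T_{n-1}(K)^m$ with $p(\vec B)=\pi(Y)$, lift each $B_r$ to $\widetilde B_r\in T_n(K)$ by prescribing only its last column, and try to force $(p(\widetilde B_1,\ldots,\widetilde B_m))_{\bullet,n}=Y_{\bullet,n}$. Taking the $(n,n)$-entries of the lifts to be $0$ annihilates every word containing two or more last-column corrections, so $(p(\widetilde{\vec B}))_{\bullet,n}=\sum_{r=1}^m M_r w_r$, where $w_r\in K^{n-1}$ is the last column of $\widetilde B_r$ (rows $1,\ldots,n-1$) and $M_r=p_r(\vec B)$ for the multilinear $x_r$-last partial polynomial $p_r=\sum_{\sigma:\sigma(m)=r}a_\sigma x_{\sigma(1)}\cdots x_{\sigma(m-1)}$. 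The inductive step then reduces to: one can choose $\vec B$ inside the fibre $\{p(\vec B)=\pi(Y)\}$ so that $\sum_r\operatorname{Im}(M_r)$ contains the space $\{v\in K^{n-1}:v_i=0\text{ for }i>n-\ell+1\}$ of admissible last columns.

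That reduction is where I expect the difficulty to concentrate: it is essentially the entire content of Conjecture \ref{Con}, and it is what Fagundes and Mello \cite{FM} established by direct computation for $\deg p\le 4$. I would attack it from two sides. The computational side is to exhibit a suitable $\vec B$ explicitly: solve $p(\vec B)=\pi(Y)$ with $\vec B$ chosen blockwise so that enough of the $M_r$ are forced to act like identity blocks, using the base case $p(T_\ell(K))=Ke_{1,\ell}$ inside $\ell\times\ell$ windows, and add such blockwise solutions by means of the disjoint-support remark: if the $A_r$ are supported on a set of indices disjoint from that supporting the $A_r'$, then every word involving both families vanishes and $p(\vec A+\vec A')=p(\vec A)+p(\vec A')$. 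The geometric side---here the hypothesis that $K$ is algebraically closed enters---is to show that $\Phi:T_n(K)^m\to T_n(K)^{(\ell-2)}$, $\vec A\mapsto p(\vec A)$, is dominant by producing one point at which $d\Phi$ is surjective (again assembling independent directions from the base case in disjoint windows), and then to upgrade dominance to surjectivity. This last upgrade is the delicate point: $\operatorname{Im}\Phi$ is a constructible cone and so contains a dense open set, but a dense constructible cone in an affine space need not be everything, so one must bring in extra structure---e.g.\ the invariance of $\operatorname{Im}\Phi$ under conjugation by the invertible elements of $T_n(K)$, combined with its being a cone---to rule out the degenerate configurations. Making this last step rigorous is, I believe, the main obstacle; everything preceding it is bookkeeping assembled from the window identity, the corner homomorphisms, and the disjoint-support remark.
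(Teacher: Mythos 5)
You should first note that the statement you were asked to prove is not proved in this paper at all: it is Conjecture 1 of Fagundes and de Mello, quoted as an open problem, and the paper only records partial resolutions (Wang for $n=2$, de Mello for $n=3$, Gargate--de Mello for infinite fields, Luo--Wang for $|K|\geq \frac{n(n-1)}{2}$). So your attempt has to stand on its own, and it does not: it is an honest sketch whose two self-acknowledged gaps are precisely the substance of the conjecture. The parts you do establish -- the window identity, the inclusion $p(T_n(K))\subseteq T_n(K)^{(\ell-2)}$, the base case $p(T_\ell(K))=Ke_{1,\ell}$, and the disjoint-support additivity -- are correct but are the routine bookkeeping (they correspond to Lemmas 3.1--3.4 here, specialized to multilinear $p$), and your sharper target $p(T_n(K))=T_n(K)^{(\ell(p)-2)}$ is indeed the expected exact form (consistent with Luo--Wang and Luo--Chen under field hypotheses). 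What is missing is everything past that: the inductive-step claim that one can select a preimage $\vec B$ of $\pi(Y)$ for which $\sum_r \mathrm{Im}(M_r)$ covers the space of admissible last columns is asserted, not proved, and you yourself flag it as ``essentially the entire content'' of the conjecture.

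The concrete reason your fallback plan cannot close the gap as described: the geometric route (dominance at a smooth point, then ``upgrade to surjectivity'' via the cone structure and conjugation-invariance) is exactly where this paper and Panja--Prasad stop at \emph{density}. For general polynomials with zero constant term and $1<r<n-1$, the image is Zariski-dense in $T_n(K)^{(r-1)}$ yet can fail to equal it, so a dense constructible cone invariant under conjugation by invertible upper triangular matrices is genuinely not enough to force surjectivity; any successful upgrade must exploit multilinearity in a specific way that your sketch never pins down. In addition, the conjecture is stated over an arbitrary field $K$, whereas both your dominance argument and the Lemma 3.6-type ``simultaneous nonvanishing'' tricks need $K$ algebraically closed or at least infinite; the cases that remain open in the literature are precisely small finite fields, where these generic-point arguments are unavailable. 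So the proposal identifies the right skeleton and the right exact statement, but it is not a proof, and the step it leaves open is the one no one has yet supplied in full generality.
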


In 2019 Wang \cite{Wang2019} gave a positive answer of
Conjecture \ref{Con} for $n=2$ (see also \cite{Wang20191}). In 2021 Mello \cite{Mello1} solved Conjecture \ref{Con} for $n=3$ and $|K|\geq 3$. In 2022 Gargate and Mello \cite{Mello} solved Conjecture \ref{Con} on an infinite field. In the same year, Luo and Wang
\cite{Wang2022} solved Conjecture \ref{Con} under some mild conditions on the ground field $K$. More precisely, they obtained the following result:

\begin{theorem}\cite[Theorem 1.1]{Wang2022}\label{TTT1}
Let $m\geq 1$ be an integer. Let $n\geq 2$ be an integer. Let $K$ be a field. Let $p(x_1,\ldots,x_m)$ be a nonzero
multilinear polynomial in non-commutative variables over $K$. Suppose that $|K|\geq\frac{n(n-1)}{2}$. We have that $p(T_n(K))=T_n(K)^{(t)}$ for some integer $-1\leq t\leq \frac{m}{2}$.
\end{theorem}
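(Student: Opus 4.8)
The plan is to identify $t$ intrinsically as the depth of the radical filtration that $p$ collapses every evaluation into, to prove $p(T_n(K))\subseteq T_n(K)^{(t)}$ cheaply, and then to prove the reverse inclusion by an explicit construction of preimages, the hypothesis $|K|\ge\frac{n(n-1)}{2}$ being exactly what makes the construction run. Write $p=\sum_{\sigma\in S_m}a_\sigma x_{\sigma(1)}\cdots x_{\sigma(m)}$ and put $T_n(K)=D\oplus J$, with $D$ the diagonal subalgebra and $J=T_n(K)^{(0)}$ the Jacobson radical, so $J^{k}=T_n(K)^{(k-1)}$ and $J^{n}=0$. Composing an evaluation with $T_n(K)\to T_n(K)/J\cong K^{n}$ (a commutative algebra) yields $(\sum_\sigma a_\sigma)\overline{X_1}\cdots\overline{X_m}$, so $p(T_n(K))\subseteq J$ iff $\sum_\sigma a_\sigma=0$. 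Since $J^{n}=0$, there is a largest $t\in\{-1,0,\dots,n-1\}$ with $p(T_n(K))\subseteq T_n(K)^{(t)}$; if $t=n-1$ then $p$ is an identity of $T_n(K)$, so $p(T_n(K))=0=T_n(K)^{(n-1)}$, and the minimality of identity degrees recalled in (B) gives $m\ge2n$, whence $t=n-1\le m/2$ and there is nothing more to prove; otherwise $p(T_n(K))\not\subseteq T_n(K)^{(t+1)}$, and $t=-1$ exactly when $\sum_\sigma a_\sigma\ne0$. It remains to prove (A) $T_n(K)^{(t)}\subseteq p(T_n(K))$ and (B) $t\le m/2$.

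For (A), for a prescribed $B\in T_n(K)^{(t)}$ I would construct a tuple $(X_1,\dots,X_m)$ with $p(X_1,\dots,X_m)=B$, working superdiagonal by superdiagonal. Fix a generic diagonal matrix $\Lambda$ and seek $X_j=\Lambda+N_j$ with $N_j$ assembled from matrix units; for such substitutions the entries of $p(X_1,\dots,X_m)$ on a given superdiagonal are values of a fixed explicit tuple of multilinear forms in the diagonal parameters, so matching $B$ amounts to a sequence of linear systems whose coefficient matrices are of Vandermonde type in those parameters. Solvability needs only enough pairwise distinct scalars to keep the Vandermonde determinants nonzero, and the number required is at most $\frac{n(n-1)}{2}=\dim_K T_n(K)^{(0)}$ — this is exactly where $|K|\ge\frac{n(n-1)}{2}$ is used. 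That the relevant multilinear forms are not identically zero, and jointly hit the whole of each superdiagonal of $T_n(K)^{(t)}$, would be proved by induction on $n$ using a two-sided ideal such as the first row $\sum_j Ke_{1j}$ (whose quotient is $T_{n-1}(K)$), together with conjugation by the diagonal torus and the unipotent radical, which transports a realized entry along a superdiagonal; Fagundes' complete description of the image on the strictly upper triangular part in \cite{Fag} serves as a base input.

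For (B) it suffices to show $p(T_n(K))\not\subseteq T_n(K)^{(\lfloor m/2\rfloor+1)}=J^{\lfloor m/2\rfloor+2}$. If $n\le\lfloor m/2\rfloor+1$ this is automatic, since then $t\le n-1\le m/2$. If $n\ge\lfloor m/2\rfloor+2$, then $T_{\lfloor m/2\rfloor+2}(K)$ embeds as the top-left corner subalgebra of $T_n(K)/J^{\lfloor m/2\rfloor+2}$, since no matrix unit of that corner is killed by the quotient map. The minimal degree of a polynomial identity of $T_k(K)$ equals $2k$ — it is attained by $[x_1,x_2][x_3,x_4]\cdots[x_{2k-1},x_{2k}]$, and a matrix-unit substitution shows no nonzero polynomial of smaller degree vanishes on $T_k(K)$ — and since $m<2(\lfloor m/2\rfloor+2)$, the nonzero polynomial $p$ of degree $m$ is not an identity of $T_{\lfloor m/2\rfloor+2}(K)$, hence not of $T_n(K)/J^{\lfloor m/2\rfloor+2}$. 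Therefore $p(T_n(K))\not\subseteq J^{\lfloor m/2\rfloor+2}$, which forces $t+1\le\lfloor m/2\rfloor+1$, i.e.\ $t\le m/2$.

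The genuine obstacle is the non-degeneracy input inside (A): showing that, at each stage, the multilinear forms whose values are the entries of $p(X_1,\dots,X_m)$ on the relevant superdiagonal are not all identically zero and together cover that superdiagonal. This is a Lvov--Kaplansky type phenomenon — it fails for full matrix algebras — and what makes it work for $T_n(K)$ is the solvable structure, which is precisely what allows the induction on $n$ (reducing to $T_{n-1}(K)$ through a row or column ideal, with Fagundes' theorem on $T_n(K)^{(0)}$ as base) to be carried out. Keeping the Vandermonde bookkeeping quantitative — so that the argument runs over every field with at least $\frac{n(n-1)}{2}$ elements, not only over infinite ones — is the remaining, more routine, technical point.
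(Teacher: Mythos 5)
This statement is quoted in the paper as a known result from Luo--Wang \cite{Wang2022}; the paper under review contains no proof of it, so there is nothing internal to compare against, and your proposal must stand on its own. Its architecture is sound: defining $t$ as the largest index with $p(T_n(K))\subseteq T_n(K)^{(t)}$, your part (B) is essentially a complete and correct argument (the corner embedding of $T_{\lfloor m/2\rfloor+2}(K)$ into $T_n(K)/J^{\lfloor m/2\rfloor+2}$ is fine, and the staircase substitution $x_1=e_{11},x_2=e_{12},x_3=e_{22},\dots$ does show that no nonzero \emph{multilinear} polynomial of degree $<2k$ is an identity of $T_k(K)$, over any field).

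The genuine gap is part (A), which is the entire content of the theorem and which you yourself flag as "the genuine obstacle." Knowing $p(T_n(K))\subseteq T_n(K)^{(t)}$ and $p(T_n(K))\not\subseteq T_n(K)^{(t+1)}$ gives nothing about the reverse inclusion: a priori the image could be a proper, non-additive subset of $T_n(K)^{(t)}$ (this is exactly the Lvov--Kaplansky phenomenon, and it is why \cite{Wang2022} runs to some thirty pages). Your sketch for (A) contains two unproven claims that cannot be waved through. First, for fixed diagonal parts the entries of $p(X_1,\dots,X_m)$ on a given superdiagonal are \emph{not} linear in the off-diagonal entries of the $X_j$ (a multilinear $p$ is multilinear in the matrices, not in their matrix entries), so "a sequence of linear systems of Vandermonde type" is not yet a construction; one must specify which entries are frozen at each stage so that the remaining dependence really is linear, and show the resulting triangular system is consistent with the entries already matched on shorter superdiagonals. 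Second, the non-degeneracy input --- that the relevant coefficient forms are not identically zero and jointly cover each superdiagonal of $T_n(K)^{(t)}$ --- is asserted to follow "by induction on $n$ using the first-row ideal and Fagundes' theorem," but no inductive step is given, and Fagundes' result concerns evaluations on $T_n(K)^{(0)}$ rather than on $T_n(K)$, so it does not directly serve as a base case here. Finally, the claim that the number of distinct scalars needed is at most $\frac{n(n-1)}{2}$ is unsupported by any count. As it stands the proposal is a plausible outline of a known hard proof, not a proof.
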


In 2022 Fagundes and Koshlukov \cite{FK} investigated the image of multilinear graded polynomials on upper triangular matrix algebras. Recently Luo and Chen \cite{LuoChen} gave a complete description of the image of linear polynomials on upper triangular matrix algebras, which improves Theorem \ref{TTT1} and some results in \cite{FK}.

In 2021 Zhou and Wang \cite{ZhouWang} gave a complete description of
the image of complete homogeneous polynomials on $2\times 2$ upper triangular matrix algebras over an algebraically closed field $K$. In the same year, Wang, Zhou, and Luo \cite{WangZL} gave a complete description of
the image of polynomials with zero constant term on $2\times 2$ upper triangular matrix algebras over an algebraically closed field $K$. In 2022 Chen, Luo, Wang \cite{ChenLuoWang} gave a complete description of the image of polynomials with zero constant term on $3\times 3$ upper triangular matrix algebras over an algebraically closed field $K$. In 2023 Panja and Prasad \cite{PP} discussed the image of polynomials with zero constant term and Waring type problems on upper triangular matrix algebras over an algebraically closed field. More precisely, they obtained the following  main result:

\begin{theorem}\cite[Theorem 5.18]{PP}\label{PP}
Let $n\geq 2$ and $m\geq 1$ be integers. Let $p(x_1,\ldots,x_m)$ be a
polynomial with zero constant term in non-commutative variables over an algebraically closed field $K$. Set $r=$\emph{ord}$(p)$. Then one of the following statements holds.
\begin{enumerate}
\item[(i)] Suppose that $r=0$. We have that $p(T_n(K))$ is a dense subset of $T_n(K)$ (with respect to the Zariski topology);
\item[(ii)] Suppose that $r=1$. We have that $p(T_n(K))=T_n(K)^{(0)}$;
\item[(iii)] Suppose that $1<r<n-1$. We have that $p(T_n(K))\subseteq T_n(K)^{(r-1)}$, and equality might not hold in general.
Furthermore, for every $n$ and $r$ there exists $d$ such that each element of $T_n(K)^{(r-1)}$ can be written as a sum of $d$ many elements from $p(T_n(K))$;
\item[(iv)] Suppose that $r=n-1$. We have that $p(T_n(K))=T_{n}(K)^{(n-2)}$;
\item[(v)] Suppose that $r\geq n$. We have that $p(T_n(K))=\{0\}$.
\end{enumerate}
\end{theorem}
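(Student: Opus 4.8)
\medskip
\noindent
The plan is to handle the five cases around a common algebraic skeleton. Write $T_n(K)=D\oplus J$, where $D\cong K^n$ is the diagonal subalgebra and $J=T_n(K)^{(0)}$ is the Jacobson radical; recall that $J^k=T_n(K)^{(k-1)}$, that $J^n=\{0\}$, and that the ``diagonal'' map $T_n(K)\to D$ is an algebra homomorphism with kernel $J$. Let $I$ be the ideal of $K\langle x_1,\dots,x_m\rangle$ generated by all commutators; then $K\langle x_1,\dots,x_m\rangle/I\cong K[x_1,\dots,x_m]$, $I$ is homogeneous, $r=\mathrm{ord}(p)$ is characterized by $p\in I^r\setminus I^{r+1}$, and for every homomorphism $\phi$ from $K\langle x_1,\dots,x_m\rangle$ to $T_n(K)$ one has $\phi(I^s)\subseteq J^s$ (commutators map into $J$ because $T_n(K)/J$ is commutative, and $J$ is an ideal). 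This already gives $p(T_n(K))\subseteq T_n(K)^{(r-1)}$ in (ii), (iii) and (iv), and, since $J^n=\{0\}$, all of (v). Using the classical fact that every polynomial identity of $T_n(K)$ lies in $I^n$ (indeed $\mathrm{Id}(T_n(K))$ is generated as a $T$-ideal by $[x_1,x_2]\cdots[x_{2n-1},x_{2n}]\in I^n$), we also record that in cases (i)--(iv) the polynomial $p$ is not an identity of $T_n(K)$.

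For (i), $r=0$ means the abelianization $\bar p\in K[x_1,\dots,x_m]$ is a nonzero polynomial with zero constant term, hence non-constant, hence --- as $K$ is algebraically closed --- a surjection onto $K$. Substituting diagonal matrices then realizes every diagonal matrix as a value of $p$ (the $k$-th diagonal entry of the value being $\bar p$ applied to the $k$-th diagonal entries of the arguments), so in particular every diagonal $\Delta$ with pairwise distinct entries lies in $p(T_n(K))$. For such $\Delta$ the operator $\mathrm{ad}_\Delta$ is bijective on $J$, so the conjugation map $u\mapsto u\Delta u^{-1}$, $u\in 1+J$, is --- by successive approximation along $J\supseteq J^2\supseteq\cdots\supseteq J^n=\{0\}$ --- a surjection onto $\Delta+J$; since $p(uA_1u^{-1},\dots,uA_mu^{-1})=u\,p(A_1,\dots,A_m)\,u^{-1}$, this forces $\Delta+J\subseteq p(T_n(K))$, and the union of these cosets over all such $\Delta$ is a dense Zariski-open subset of $T_n(K)$. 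For (iv), $r=n-1$: as $p$ is not an identity it takes a nonzero value, which necessarily lies in the one-dimensional space $J^{n-1}=T_n(K)^{(n-2)}$; conjugating a witnessing tuple by the invertible diagonal matrix with entries $(\lambda,1,\dots,1)$ rescales that line by $\lambda$, so $p(T_n(K))=T_n(K)^{(n-2)}$.

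For (ii), $r=1$, I would hit every $N\in J$ as follows: specialize all variables but one to suitable scalars, except that one further variable is specialized to a diagonal matrix with pairwise distinct entries, leaving a single free variable $x_i$. The resulting matrix function of $x_i$ then has image in $J$, zero constant term, and --- because $p\notin I^2$, so an appropriate ``second derivative'' of $p$ survives this specialization --- a linear part which, for generic choices of the frozen parameters, is a bijective $K$-linear endomorphism of $J$; a Newton-type iteration along $J\supseteq J^2\supseteq\cdots$ then solves for $x_i$ the equation presenting $N$ as that value, whence $J\subseteq p(T_n(K))$. For (iii) the inclusion is already in hand. The failure of equality is witnessed, for instance, by $p=[x_1,x_2]^2$ on $T_n(K)$ with $n\ge5$: since $[x_1,x_2](T_n(K))=T_n(K)^{(0)}$, the image of $p$ equals $\{N^2:N\in T_n(K)^{(0)}\}$, a proper subset of $T_n(K)^{(1)}$ (for $n=5$ a matrix in $T_5(K)^{(1)}$ of Jordan type $(3,1,1)$ is not the square of any nilpotent matrix). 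For the Waring assertion I would first show $\mathrm{span}_K\,p(T_n(K))=T_n(K)^{(r-1)}$ --- via multilinearization of $p$ (whose order is again $r$) together with Theorem \ref{TTT1}, using that values of the multilinearization are $K$-linear combinations of values of $p$ --- and then deduce a uniform bound $d=d(n,r)$ from the fact that $p(T_n(K))$ is a conjugation-stable Zariski-constructible subset of the vector space $T_n(K)^{(r-1)}$ containing $0$; if in addition $p(T_n(K))$ is shown to be Zariski-dense in $T_n(K)^{(r-1)}$, then $d=2$ already suffices, since any point of a vector space is the sum of two points of any prescribed dense open subset.

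The main obstacle is the reverse inclusion in (ii) together with the Waring estimate in (iii): one needs a robust correspondence between the condition ``$p\notin I^{r+1}$'' and the non-degeneracy of an explicit differential of the evaluation map along the filtration $T_n(K)\supseteq T_n(K)^{(0)}\supseteq\cdots\supseteq T_n(K)^{(n-2)}$, uniformly in $p$, and some care about the characteristic of $K$ (so that multilinearization does not collapse and Theorem \ref{TTT1} can be applied).
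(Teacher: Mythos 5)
This theorem is quoted from Panja--Prasad as \cite[Theorem 5.18]{PP}; the present paper does not prove it (it only proves Theorem \ref{T1}, which sharpens part (iii)), so there is no in-paper proof to compare against, and your proposal has to be judged on its own. Its skeleton is correct: over an infinite field $\mathcal{T}(T_k(K))=I^k$ for the commutator $T$-ideal $I$, so $r=\mathrm{ord}(p)$ means $p\in I^r\setminus I^{r+1}$, evaluations send $I^s$ into $J^s$, and $p(T_n(K))\subseteq T_n(K)^{(r-1)}$ follows, giving (v) at once. Your arguments for (i) (diagonal values plus the unipotent conjugation orbit $\Delta+J$ for $\Delta$ with distinct diagonal entries), for (iv) (a nonzero value in the line $J^{n-1}$ rescaled by diagonal conjugation), and the counterexample $[x_1,x_2]^2$ on $T_5(K)$ for non-equality in (iii) (Jordan type $(3,1,1)$ is not the square of a nilpotent) are all sound.

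Two steps are genuinely incomplete. In (ii) the whole difficulty is your assertion that, after freezing all but one variable to ``suitable scalars'' plus one diagonal matrix, the linear part of the resulting map is generically a bijection of $J$; this is not justified, and with all but two variables scalar you do not control enough diagonal data to force it. The repair is essentially the machinery the paper builds for Theorem \ref{T1} (Lemmas \ref{L2}, \ref{L4}, \ref{L5}): freeze \emph{every} variable to a diagonal matrix $D_i$ with diagonals $\bar b_j$ chosen so that $p_{i_0}(\bar b_s,\bar b_t)\neq 0$ for all $s<t$ and some $i_0$, and let only the strictly upper part $N$ of $u_{i_0}$ vary; then the linear part acts on each $e_{st}$ by the nonzero scalar $p_{i_0}(\bar b_s,\bar b_t)$, and your Newton iteration along $J\supseteq J^2\supseteq\cdots$ closes the argument. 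Second, the Waring bound in (iii): the ``$d=2$ from density'' route is valid but presupposes that $p(T_n(K))$ is Zariski-dense in $T_n(K)^{(r-1)}$, which is precisely the paper's main Theorem \ref{T1} and is where all the work lies -- it cannot be assumed. The alternative route (span equals $T_n(K)^{(r-1)}$ via multilinearization, then a uniform $d$ from constructibility) is only named: the multilinearization may change the order or vanish in small characteristic, and the passage from ``constructible, spanning, containing $0$'' to a uniform $d$ is itself a lemma requiring proof. So the proposal correctly disposes of (i), (iv), (v) and the non-equality in (iii), but the surjectivity in (ii) and the Waring assertion in (iii) remain unproved as written.
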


We remark that the statement (iii) in Theorem \ref{PP} gave a connection between $T_n(K)^{(r-1)}$ and $p(T_n(K))$ in the case of $1<r<n-1$, which is the Waring type problems.

In the present paper, we shall give a complete description of $p(T_n(K))$ in the case of $1<r<n-1$. More precisely, we shall prove the following main result.

\begin{theorem}\label{T1}
Let $n\geq 2$ and $m\geq 1$ be integers. Let $p(x_1,\ldots,x_m)$ be a
polynomial with zero constant term in non-commutative variables over an algebraically closed field $K$. Set $r=$\emph{ord}$(p)$. Suppose that $1<r<n-1$. We have that $p(T_n(K))$ is a dense subset of $T_n(K)^{(r-1)}$ (with respect to the Zariski topology).
\end{theorem}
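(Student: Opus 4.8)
The plan is to prove the stronger statement that the Zariski closure $\overline{p(T_n(K))}$ equals $V:=T_n(K)^{(r-1)}$; density then follows at once. Write $G:=T_n(K)^{\times}$ for the group of invertible upper triangular matrices, acting on $T_n(K)$ by conjugation. Two structural facts come for free. First, since $p$ is a polynomial, evaluation $T_n(K)^m\to T_n(K)$ is a morphism of affine varieties with irreducible source, so $W:=\overline{p(T_n(K))}$ is an irreducible closed subvariety, and it lies in $V$ by part (iii) of Theorem \ref{PP}. Second, conjugation by any $u\in G$ is a $K$-algebra automorphism of $T_n(K)$, hence $p(ua_1u^{-1},\dots,ua_mu^{-1})=u\,p(a_1,\dots,a_m)\,u^{-1}$, so $p(T_n(K))$, and therefore $W$, is $G$-stable; moreover $V$ itself is $G$-stable, since conjugation by an upper triangular matrix does not decrease the distance of a nonzero entry from the diagonal.

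Next I would feed in the Waring bound of Theorem \ref{PP}(iii): there is $d$ with $p(T_n(K))+\cdots+p(T_n(K))=V$ ($d$ summands). As the left-hand side lies in $\mathrm{span}_K W\subseteq V$, this forces $\mathrm{span}_K W=V$. In particular $W$ lies in no proper linear subspace of $V$. Writing $D_s:=\bigoplus_{i}KE_{i,i+s}$ for the $s$-th superdiagonal, so that $V=D_r\oplus D_{r+1}\oplus\cdots\oplus D_{n-1}$ and $T_n(K)^{(s-1)}=D_s\oplus T_n(K)^{(s)}$, it follows that $W$ is contained in no coordinate hyperplane $\{x_{i,i+r}=0\}$; being irreducible, $W$ is not contained in their union, so $W$ contains a point $v_0$ all of whose $r$-th superdiagonal entries $v_{1,1+r},\dots,v_{n-r,n}$ are nonzero.

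The heart of the argument, and the step I expect to be the main obstacle, is the lemma: \emph{if $v\in V$ has all of its $r$-th superdiagonal entries nonzero, then the orbit $Gv$ is dense in $V$.} Equivalently, the linear map $\mathrm{ad}_v\colon T_n(K)\to V$, $a\mapsto va-av$, is surjective: from this, rank--nullity gives $\dim Z_{T_n(K)}(v)=\dim T_n(K)-\dim V$, hence $\dim Z_G(v)=\dim G-\dim V$ (as $Z_G(v)$ is a nonempty open, hence dense, subset of the linear space $Z_{T_n(K)}(v)$), hence $\dim Gv=\dim V$, so $\overline{Gv}=V$ by irreducibility of $\overline{Gv}\subseteq V$. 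To prove surjectivity I would work superdiagonal by superdiagonal: for $r\le s\le n-1$ and $1\le a\le n-s$ one computes that $E_{a,a+s-r}v-vE_{a,a+s-r}\in T_n(K)^{(s-1)}$, with $D_s$-component equal to $v_{a+s-r,\,a+s}E_{a,a+s}-v_{a-r,a}E_{a-r,\,a+s-r}$ (the second term absent when $a\le r$). The coefficients occurring here are $r$-th superdiagonal entries of $v$, hence nonzero, and as $a$ runs over $1,\dots,n-s$ these $D_s$-components form a triangular system with nonzero diagonal, so they span $D_s$. Thus $\mathrm{ad}_v(T_n(K))+T_n(K)^{(s)}\supseteq D_s+T_n(K)^{(s)}=T_n(K)^{(s-1)}$ for every $s\ge r$; applying this for $s=r$ gives $\mathrm{ad}_v(T_n(K))+T_n(K)^{(r)}=V$, and an upward induction on $s$, using $T_n(K)^{(s-1)}=D_s\oplus T_n(K)^{(s)}$, removes the error term one superdiagonal at a time until $T_n(K)^{(s)}=0$, leaving $\mathrm{ad}_v(T_n(K))=V$.

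Finally, applying the lemma to $v_0\in W$ yields $V=\overline{Gv_0}\subseteq W$ (using $G$-stability and closedness of $W$), so $W=V$, which is the assertion. It is worth noting that every step is characteristic-free — the irreducibility and span arguments, the orbit-dimension identity $\dim Gv=\dim G-\dim Z_G(v)$, and the purely linear-algebraic computation in the lemma all hold over an arbitrary algebraically closed field — so no separability hypothesis is needed.
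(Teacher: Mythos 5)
Your proposal is correct, but it takes a genuinely different route from the paper. The paper argues entirely with explicit matrix entries: given $f$ not vanishing at a prescribed point of $T_n(K)^{(r-1)}$, it runs an induction over the positions $(s,r+s+t)$, building compatible families of entries $a_{jk}^{(i)}$ one position at a time; Lemma \ref{L4}(iii) and Lemma \ref{L5} make the coefficients $p_{i_1',\ldots,i_r'}(\bar{b}_{j_1},\ldots,\bar{b}_{j_{r+1}})$ simultaneously nonzero, and Lemmas \ref{L3} and \ref{L6} let it adjust the single new variable $x_{r+s-1,r+s+t}^{(i_r')}$ to hit the target coordinate without destroying the conditions already achieved. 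You instead argue geometrically: $W=\overline{p(T_n(K))}$ is irreducible and stable under conjugation by $G=T_n(K)^{\times}$; the Waring bound in Theorem \ref{PP}(iii) forces $\mathrm{span}_K W=T_n(K)^{(r-1)}$, so by irreducibility $W$ contains a point $v_0$ whose $r$-th superdiagonal entries are all nonzero; and your $\mathrm{ad}_{v_0}$ computation shows such a point has dense conjugation orbit. I checked the key steps: the $D_s$-component of $E_{j,j+s-r}v-vE_{j,j+s-r}$ is as you state, the resulting vectors form a triangular system with nonzero diagonal coefficients $v_{j+s-r,j+s}$, and routing through $\dim Z_G(v)=\dim\ker\mathrm{ad}_v$ rather than through the differential of the orbit map correctly avoids any separability issue in positive characteristic. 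The trade-off is that your argument is shorter and more conceptual (it even exhibits $\overline{p(T_n(K))}$ as a single orbit closure), but it leans on the Waring-type assertion of \cite{PP}, which is exactly the nontrivial part of Theorem \ref{PP}(iii), whereas the paper's construction is self-contained at that point and produces explicit approximate preimages. If you wish to remove the dependence on \cite{PP}, replace the span argument by a direct construction of a value of $p$ with all $r$-th superdiagonal entries nonzero, which follows from Lemma \ref{L4}(iii) and Lemma \ref{L5} via a staircase substitution; everything else in your proof can stay as is.
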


We organize the paper as follows: In Section $2$ we shall give some prelimiaries. We shall define a concept of compatible subsets, which is crucial for the proof of our main result. In Section $3$ we shall give the proof of our main result.

\section{prelimiaries}

By $\mathcal{N}$ we denote the set of all positive integers. Let $K$ be a field. For $n\in \mathcal{N}$ with $n\geq 2$, we can write
\[
T_n(K)=\left(
\begin{array}{cc}
T_{n-1}(K) & K^{n-1}\\
 & K
\end{array} \right).
\]

Let $\mathcal{A}$ be an algebra over $K$. Denote by $\mathcal{T}(\mathcal{A})$ the set of all polynomial identities of $\mathcal{A}$. Then it is easy to see that
\[
\mathcal{T}(K)\supset \mathcal{T}(T_2(K))\supset \mathcal{T}(T_3(K))\supset\cdots.
\]
So, given a polynomial $p$ in non-commutative variables over $K$, we define its \textbf{order}
as the least integer $m$ such that $p\in \mathcal{T}(T_m(K))$ but $p\not\in \mathcal{T}(T_{m+1}(K))$.
Note that $T_1(K)=K$. A polynomial $p$ has order $0$ if $p\not\in \mathcal{T}(K)$. We denote the order of $p$ by $\mbox{ord}(p)$. For a detailed introduction of the order of polynomials
we refer the reader to the book \cite[Chapter 5]{Drensky}.

We remark that the structure of $\mathcal{T}(T_n(K))$ is essentially known. For instance,
the basis of $\mathcal{T}(T_n(K))$ has been described in \cite[Theorem 5.4]{Drensky},
if $K$ is infinite field. A similar insight holds in the context of finite fields.

Let $K$ be an algebraically closed field. Set
\[
K^n=\{(a_1,\ldots,a_n)~|~ a_{i}\in K\}.
\]
We call $K^n$ the \emph{affine $n$-space} over $K$. We assume that $T_n(K)=K^{\frac{n(n+1)}{2}}$, an affine $\frac{n(n+1)}{2}$-space over $K$.

Let $\mathcal{A}=K[y_1,\ldots,y_n]$ be the polynomial algebra in $n$ variables over $K$. For $T\subseteq \mathcal{A}$, we define
\[
Z(T)=\{P\in K^n~|~f(P)=0\quad\mbox{for all $f\in T$}\}.
\]
We call $Z(T)$ the \emph{zero set} of $T$. A subset $Y$ of $K^n$ is an \emph{algebraic set} if there exists a subset $T$ of
$\mathcal{A}$ such that $Y=Z(T)$ (see \cite[Definition 1.1.1]{Robin}).

We define the \emph{Zariski topology} on $K^n$ by taking the open subsets to be the complements of the algebraic sets (see \cite[Definition 1.1.2]{Robin}).

It is easy to check that a subset $Y$ of $K^n$ is a dense subset of $K^n$ (with respect to the Zariski topology) if for any $P\in K^n$ and any open subset $U$ of $K^n$ with $P\in U$
we have $Y\cap U\neq \emptyset$.

Let $p(x_1,\ldots,x_m)$ be a polynomial with zero constant term over
$K$. We can write
\begin{equation}\label{e1}
p(x_1,\ldots,x_m)=\sum\limits_{k=1}^d\left(\sum\limits_{1\leq i_1,i_2,\ldots,i_k\leq m}\lambda_{i_1i_2\cdots i_k}x_{i_1}x_{i_2}\cdots x_{i_k}\right),
\end{equation}
where $\lambda_{i_1i_2\cdots i_k}\in K$ and $d$ is the degree of $p$. For any $u_i=(a_{jk}^{(i)})\in T_n(K)$, $i=1,\ldots,m$, we set
\[
\bar{a}_{jj}=(a_{jj}^{(1)},\ldots,a_{jj}^{(m)}),
\]
where $j=1,\ldots,n$. One easily check from (\ref{e1}) that

\begin{equation}\label{e2}
p(u_1,\ldots,u_m)=\left(
\begin{array}{cccc}
p(\bar{a}_{11}) & p_{12} & \ldots & p_{1n}\\
0 & p(\bar{a}_{22}) & \ldots & p_{2n}\\
 \vdots & \vdots & \ddots & \vdots\\
 0 & 0 & \ldots & p(\bar{a}_{nn})
\end{array} \right),
\end{equation}
where
\[
p_{st}=\sum\limits_{k=1}^{t-s}\left(\sum\limits_{\substack{s=j_1<j_2<\cdots <j_{k+1}=t\\1\leq i_1,\ldots,i_k\leq m}}
p_{i_1i_2\cdots i_k}(\bar{a}_{j_1j_1},\ldots,\bar{a}_{j_{k+1}j_{k+1}})a_{j_1j_2}^{(i_1)}\cdots a_{j_{k}j_{k+1}}^{(i_k)}\right)
\]
for all $1\leq s<t\leq n$, where $p_{i_1,\ldots,i_k}(z_1,\ldots,z_{m(k+1)})$, $1\leq i_1,i_2,\ldots,i_k\leq m$, $k=1,\ldots,n-1$, is a
commutative polynomial over $K$. We shall give a proof of the display (\ref{e2}) in the next section.

For $1\leq s\leq t\leq n$, we set
\[
U_{s,t}=\{(j,k)\in\mathcal{N}\times\mathcal{N}~|~s\leq j\leq k\leq t\}.
\]

It follows from (\ref{e2}) that

\[
p_{st}=\sum\limits_{k=1}^{t-s}\left(\sum\limits_{\substack{s=j_1<j_2<\cdots <j_{k+1}=t\\1\leq i_1,\ldots,i_k\leq m}}
p_{i_1i_2\cdots i_k}(\bar{x}_{j_1j_1},\ldots,\bar{x}_{j_{k+1}j_{k+1}})x_{j_1j_2}^{(i_1)}\cdots x_{j_{k}j_{k+1}}^{(i_k)}\right)
\]
where $1\leq s<t\leq n$, is a polynomial with zero constant term on the variables
\[
\left\{x_{jk}^{(i)}~|~(j,k)\in U_{s,t},i=1,\ldots,m\right\}.
\]

\begin{definition}\label{D1}
Let $K$ be a field. Let
\[
V_1=\{a_{i}\in K~|~i\in I_1\}\quad\mbox{and}\quad V_2=\{b_{i}\in K~|~i\in I_2\},
\]
where $I_1,I_2$ are two index sets. We call $V_1$ and $V_2$ compatible subsets of $K$ if $a_i=b_i$ for all $i\in I_1\cap I_2$.
 More generally, let $V_1,\ldots,V_m$ be the subsets of $K$. We call $V_i$, $i=1,\ldots,m$, compatible subsets of $K$
 if $V_i$ and $V_j$ are compatible for any $i,j\in\{1,\ldots,m\}$.
\end{definition}

Let
\[
V_{s,t}=\left\{a_{jk}^{(i)}\in K~|~(j,k)\in U_{s,t},i=1,\ldots,m\right\},
\]
where $1\leq s\leq t\leq n$, be  subsets of $K$. It is clear that $V_{s,t}$, $1\leq s\leq t\leq n$, are compatible subsets of $K$ if and only if $u_i=(a_{jk}^{(i)})\in T_n(K)$, $i=1,\ldots,m$.

\section{The proof of the main results}

We begin with the following simple result, which will be used in the proof of the next result. We omit its proof for brevity.

\begin{lemma}\label{L1}
Let $k\geq 2$ be an integer. Let $u_i=(a_{jk}^{(i)})\in T_n(K)$, where $i=1,\ldots,m$. For any $1\leq i_1,\ldots,i_w\leq m$, we have that
\[
u_{i_1}\cdots u_{i_w}=\left(
\begin{array}{cccc}
m_{11} & m_{12} & \ldots & m_{1n}\\
0 & m_{22} & \ldots & m_{2n}\\
 \vdots & \vdots & \ddots & \vdots\\
 0 & 0 & \ldots & m_{nn}
\end{array} \right),
\]
where
\[
m_{st}=\sum\limits_{s=j_1\leq j_2\leq \cdots \leq j_{w+1}=t}a_{j_1j_2}^{(i_1)}\cdots a_{j_{w}j_{w+1}}^{(i_w)}
\]
for all $1\leq s\leq t\leq n$.
\end{lemma}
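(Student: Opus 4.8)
The plan is to prove the displayed formula by induction on the number of factors $w$, using the fact that $T_n(K)$ is a subalgebra of $M_n(K)$, so that every partial product $u_{i_1}\cdots u_{i_v}$ is again upper triangular and the ordinary row-by-column multiplication rule applies.

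For the base case one may take $w=1$: here the asserted identity reads $m_{st}=\sum_{s=j_1\le j_2=t}a_{j_1j_2}^{(i_1)}=a_{st}^{(i_1)}$ for $s\le t$, which is just the statement that $u_{i_1}$ equals itself; the matrix is upper triangular. (If one insists on matching the hypothesis, one can instead start the induction at $w=2$, where the identity is the familiar formula for the product of two upper triangular matrices, the summation index being automatically confined to $s\le l\le t$ by upper triangularity.) For the inductive step, assume the formula holds for some $w\ge 1$ and write $u_{i_1}\cdots u_{i_{w+1}}=\bigl(u_{i_1}\cdots u_{i_w}\bigr)u_{i_{w+1}}$. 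Denoting the $(s,l)$-entry of $u_{i_1}\cdots u_{i_w}$ by $M_{sl}$, the ordinary matrix product gives
\[
\bigl(u_{i_1}\cdots u_{i_{w+1}}\bigr)_{st}=\sum_{l=1}^{n}M_{sl}\,a_{lt}^{(i_{w+1})}=\sum_{s\le l\le t}M_{sl}\,a_{lt}^{(i_{w+1})},
\]
the second equality holding because $u_{i_1}\cdots u_{i_w}$ and $u_{i_{w+1}}$ are upper triangular, so $M_{sl}=0$ unless $s\le l$ and $a_{lt}^{(i_{w+1})}=0$ unless $l\le t$. Substituting the inductive expression for $M_{sl}$ and renaming $l=j_{w+1}$, $t=j_{w+2}$, the two nested sums merge into a single sum over chains $s=j_1\le j_2\le\cdots\le j_{w+2}=t$, which is precisely the claimed formula with $w$ replaced by $w+1$. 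In particular, taking $s=t$ forces every $j_k$ to equal $s$, so $m_{ss}=a_{ss}^{(i_1)}\cdots a_{ss}^{(i_w)}$ and the matrix has the stated upper triangular shape.

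The only step that needs even a moment's thought is the re-indexing in the inductive step: one must observe that appending a new index $l$ with $s\le l\le t$ to the end of an admissible chain $s=j_1\le\cdots\le j_{w+1}=l$ yields exactly the admissible chains of length $w+2$ from $s$ to $t$, each arising exactly once. This is immediate, so there is no real obstacle — the lemma is in essence associativity of matrix multiplication combined with the remark that upper triangularity pins all intermediate summation indices into the interval $[s,t]$. For brevity one could, as in the source, simply record the statement; but the induction above makes the bookkeeping explicit.
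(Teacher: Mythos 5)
Your induction on $w$ is correct, and since the paper omits the proof of this lemma entirely (``We omit its proof for brevity''), your argument simply supplies the routine verification the paper takes for granted: iterate the row-by-column product formula and use upper triangularity to confine the intermediate indices to weakly increasing chains from $s$ to $t$. The re-indexing in your inductive step is sound (each chain $s=j_1\leq\cdots\leq j_{w+2}=t$ arises exactly once from its truncation together with $l=j_{w+1}$), so no gap remains; the only cosmetic point is that the lemma's hypothesis ``$k\geq 2$'' is evidently a typo for the number of factors $w$, which your choice of base case already accommodates.
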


We now give the proof of the display (\ref{e2}).

\begin{lemma}\label{L2}
Let $n\geq 2$ and $m\geq 1$ be integers. Let $p(x_1,\ldots,x_m)$ be a polynomial with zero constant term in
non-commutative variables over a field $K$. For $u_i=(a_{jk}^{(i)})\in T_n(K)$, $i=1,\ldots,m$, we have that
\[
p(u_1,\ldots,u_m)=\left(
\begin{array}{cccc}
p(\bar{a}_{11}) & p_{12} & \ldots & p_{1n}\\
0 & p(\bar{a}_{22}) & \ldots & p_{2n}\\
 \vdots & \vdots & \ddots & \vdots\\
 0 & 0 & \ldots & p(\bar{a}_{nn})
\end{array} \right),
\]
where
\[
p_{st}=\sum\limits_{k=1}^{t-s}\left(\sum\limits_{\substack{s=j_1<j_2<\cdots <j_{k+1}=t\\1\leq i_1,\ldots,i_k\leq m}}
p_{i_1i_2\cdots i_k}(\bar{a}_{j_1j_1},\ldots,\bar{a}_{j_{k+1}j_{k+1}})a_{j_1j_2}^{(i_1)}\cdots a_{j_{k}j_{k+1}}^{(i_k)}\right)
\]
for all $1\leq s<t\leq n$, where $p_{i_1,\ldots,i_k}(z_1,\ldots,z_{m(k+1)})$ is a commutative polynomial over $K$.
\end{lemma}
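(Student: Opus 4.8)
The plan is to expand $p$ as in (\ref{e1}) and use that the $(s,t)$ entry of a sum of matrices is the sum of the $(s,t)$ entries, so it is enough to read off the $(s,t)$ entry of each monomial $u_{i_1}\cdots u_{i_w}$ and add up. By Lemma \ref{L1} (the case $w=1$ being immediate), for $1\le s\le t\le n$ the $(s,t)$ entry of $p(u_1,\ldots,u_m)$ equals
\[
\sum_{w=1}^{d}\ \sum_{1\le i_1,\ldots,i_w\le m}\lambda_{i_1\cdots i_w}\sum_{s=j_1\le\cdots\le j_{w+1}=t}a_{j_1j_2}^{(i_1)}\cdots a_{j_wj_{w+1}}^{(i_w)},
\]
where $d=\deg p$. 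First I would deal with the diagonal: when $s=t$ the innermost chain must be constant, so the $(s,s)$ entry collapses to $\sum_{w}\sum_{i_1,\ldots,i_w}\lambda_{i_1\cdots i_w}a_{ss}^{(i_1)}\cdots a_{ss}^{(i_w)}$, which is precisely the scalar obtained by substituting the commuting values $a_{ss}^{(1)},\ldots,a_{ss}^{(m)}$ into $p$, namely $p(\bar a_{ss})$.

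Now fix $s<t$. In a chain $s=j_1\le j_2\le\cdots\le j_{w+1}=t$ call a step $j_\ell\to j_{\ell+1}$ a \emph{jump} when $j_\ell<j_{\ell+1}$; since $s<t$ there is at least one jump, say $k$ of them ($1\le k\le t-s$), at steps $\ell_1<\cdots<\ell_k$, and the distinct values visited form a chain $s=r_1<r_2<\cdots<r_{k+1}=t$. A jump step $\ell_b$ contributes the off-diagonal factor $a_{r_br_{b+1}}^{(i_{\ell_b})}$, while every non-jump step sits at one of the levels $r_1,\ldots,r_{k+1}$ and contributes the matching diagonal factor. The main step is to resum the displayed sum by grouping its terms first according to $k$, then according to the visited chain (rename it $j_1<\cdots<j_{k+1}$) together with the $k$ indices carried by the jump steps (rename them $i_1,\ldots,i_k$); what remains to be summed is, at each level $j_a$, the number of non-jump steps parked there and their indices. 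Since $\lambda_{i_1\cdots i_w}$ depends only on the word of indices, and the non-jump factors parked at level $j_a$ are products of entries of the fixed tuple $\bar a_{j_aj_a}=(a_{j_aj_a}^{(1)},\ldots,a_{j_aj_a}^{(m)})$, this remaining sum is the value at $(\bar a_{j_1j_1},\ldots,\bar a_{j_{k+1}j_{k+1}})$ of a single commutative polynomial $p_{i_1\cdots i_k}(z_1,\ldots,z_{m(k+1)})$, obtained formally by replacing each $a_{j_aj_a}^{(i)}$ by an independent variable $z_{(a-1)m+i}$; after this substitution the sum sees only the combinatorial pattern of jumps and runs, not the actual values $j_1<\cdots<j_{k+1}$, so $p_{i_1\cdots i_k}$ depends on $i_1,\ldots,i_k$ alone. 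It has finitely many terms because $\lambda_{i_1\cdots i_w}=0$ once $w>d$, which caps the number of non-jump steps. Reassembling gives exactly the asserted expression for $p_{st}$.

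The one place where real care is needed is this regrouping: decomposing a weakly increasing chain into its jumps and its constant runs, keeping track of which of the $a_{jk}^{(i)}$ are off-diagonal entries and which are diagonal ones, and checking that the reassembled coefficient is one and the same commutative polynomial $p_{i_1\cdots i_k}$ for every admissible chain of levels $j_1<\cdots<j_{k+1}$; everything else is routine bookkeeping. (One could instead proceed by induction on $n$ via the block decomposition of $T_n(K)$ recorded in Section~2, applying the inductive hypothesis to the top-left $(n-1)\times(n-1)$ corner and Lemma \ref{L1} to read off the last column; the direct combinatorial argument above seems the more transparent.)
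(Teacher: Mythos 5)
Your proof is correct and follows essentially the same route as the paper's: expand $p$ monomial by monomial, apply Lemma \ref{L1} to read off the $(s,t)$ entry, and regroup the weakly increasing chains $s=j_1\le\cdots\le j_{w+1}=t$ according to their strict jumps so that the diagonal factors assemble into the commutative polynomials $p_{i_1\cdots i_k}$. The only difference is that you spell out the combinatorial regrouping (and the independence of $p_{i_1\cdots i_k}$ from the particular chain of levels) that the paper states without detail.
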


\begin{proof}
In view of Lemma \ref{L1}, we get from (\ref{e1}) that
\begin{eqnarray}\label{e3}
\begin{split}
p(u_1,\ldots,u_m)&=\sum\limits_{k=1}^d\left(\sum\limits_{1\leq i_1,\ldots,i_k\leq m}\lambda_{i_1\cdots i_k}u_{i_1}\cdots u_{i_k}\right)\\
&=\sum\limits_{k=1}^d\left(\sum\limits_{1\leq i_1,\ldots,i_k\leq m}\lambda_{i_1\cdots i_k}\left(
\begin{array}{cccc}
m_{11} & m_{12} & \ldots & m_{1n}\\
0 & m_{22} & \ldots & m_{2n}\\
 \vdots & \vdots & \ddots & \vdots\\
 0 & 0 & \ldots & m_{nn}
\end{array} \right)\right)\\
&=\left(
\begin{array}{cccc}
p_{11} & p_{12} & \ldots & p_{1n}\\
0 & p_{22} & \ldots & p_{2n}\\
 \vdots & \vdots & \ddots & \vdots\\
 0 & 0 & \ldots & p_{nn}
\end{array} \right)
\end{split}
\end{eqnarray}
where
\begin{eqnarray*}
\begin{split}
p_{st}&=\sum\limits_{k=1}^d\left(\sum\limits_{1\leq i_1,\ldots,i_k\leq m}\lambda_{i_1\cdots i_k}m_{st}\right)\\
&=\sum\limits_{k=1}^d\left(\sum\limits_{1\leq i_1,\ldots,i_k\leq m}\lambda_{i_1\cdots i_k}\left(
\sum\limits_{s=j_1\leq j_2\leq \cdots \leq j_{k+1}=t}a_{j_1j_2}^{(i_1)}\cdots a_{j_{k}j_{k+1}}^{(i_k)}\right)\right)\\
&=\sum\limits_{k=1}^{d}\left(\sum\limits_{\substack{s=j_1\leq j_2\leq \cdots \leq j_{k+1}=t\\1\leq i_1,\ldots,i_k\leq m}}\lambda_{i_1i_2\cdots i_k}a_{j_1j_2}^{(i_1)}\cdots a_{j_{k}j_{k+1}}^{(i_k)}\right),
\end{split}
\end{eqnarray*}
where $1\leq s\leq t\leq n$. For any $1\leq s\leq n$, we get from (\ref{e3}) that
\begin{eqnarray*}
\begin{split}
p_{ss}&=\sum\limits_{k=1}^{d}\left(\sum\limits_{1\leq i_1,\ldots,i_k\leq m}\lambda_{i_1i_2\cdots i_k}a_{ss}^{(i_1)}\cdots a_{ss}^{(i_k)}\right)\\
&=p(\bar{a}_{ss}).
\end{split}
\end{eqnarray*}
For any $1\leq s<t\leq n$, we get from (\ref{e3}) that
\begin{eqnarray*}
\begin{split}
p_{st}&=\sum\limits_{k=1}^{d}\left(\sum\limits_{\substack{s=j_1\leq j_2\leq \cdots \leq j_{k+1}=t\\1\leq i_1,\ldots,i_k\leq m}}\lambda_{i_1i_2\cdots i_k}a_{j_1j_2}^{(i_1)}\cdots a_{j_{k}j_{k+1}}^{(i_k)}\right)\\
&=\sum\limits_{k=1}^{t-s}\left(\sum\limits_{\substack{s=j_1<j_2<\cdots <j_{k+1}=t\\1\leq i_1,\ldots,i_k\leq m}}
p_{i_1i_2\cdots i_k}(\bar{a}_{j_1j_1},\ldots,\bar{a}_{j_{k+1}j_{k+1}})a_{j_1j_2}^{(i_1)}\cdots a_{j_{k}j_{k+1}}^{(i_k)}\right),
\end{split}
\end{eqnarray*}
where $p_{i_1,\ldots,i_k}(z_1,\ldots,z_{m(k+1)})$ is a commutative polynomial over $K$. This proves the result.
\end{proof}

The following technical result comes from both \cite[Lemma 3.1]{ChenLuoWang} and \cite[Lemma 3.1]{WangZL}.
We omit its proof for brevity.

\begin{lemma}\label{L3}
Let $m\geq 1$ be integer. Let $p(x_1,\ldots,x_m)$ be a
 polynomial with zero constant term over an algebraically closed field $K$. Suppose that $p(K)\neq\{0\}$. Then $p(K)=K$.
\end{lemma}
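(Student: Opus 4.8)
The plan is to reduce the statement to a one-variable fact over an algebraically closed field. First I would observe that, since $K$ is commutative, substituting field elements $a_1,\dots,a_m\in K$ into $p$ makes the non-commuting variables commute, so the evaluation map is given by an ordinary commutative polynomial: there is $q\in K[t_1,\dots,t_m]$ with $q(0,\dots,0)=0$ such that $p(a_1,\dots,a_m)=q(a_1,\dots,a_m)$ for all $a_i\in K$; in particular $p(K)=q(K^m)$. Because $K$ is algebraically closed it is infinite, and a polynomial over an infinite field is the zero polynomial as soon as it induces the zero function; hence the hypothesis $p(K)\neq\{0\}$ forces $q\neq 0$ in $K[t_1,\dots,t_m]$. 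Since $q$ has zero constant term, $d:=\deg q\geq 1$.

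Next I would pass to a well-chosen line through the origin. Let $q=q_1+q_2+\cdots+q_d$ be the decomposition into homogeneous components (there is no degree-$0$ part, by the zero constant term hypothesis). The top component $q_d$ is a nonzero homogeneous polynomial, so, $K$ being infinite, there exists $v\in K^m$ with $q_d(v)\neq 0$. Consider the univariate polynomial
\[
g(\lambda)=q(\lambda v)=\sum_{i=1}^{d}q_i(v)\,\lambda^{i}\in K[\lambda],
\]
which has zero constant term and, because $q_d(v)\neq 0$, has degree exactly $d\geq 1$.

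Finally I would use algebraic closure to hit every value. Fix an arbitrary $c\in K$. The polynomial $g(\lambda)-c\in K[\lambda]$ has degree $d\geq 1$, hence has a root $\lambda_0\in K$ since $K$ is algebraically closed. Then $q(\lambda_0 v)=g(\lambda_0)=c$, so $c\in q(K^m)=p(K)$. As $c$ was arbitrary, $p(K)=K$.

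I do not expect any genuine obstacle here; the only points deserving a word of care are the passage from the non-commutative polynomial $p$ to the associated commutative polynomial $q$, and the use of the fact that an algebraically closed field is infinite, which is precisely what guarantees both that $q\neq 0$ as a polynomial (not merely that it induces a nonzero function) and that the nonzero homogeneous form $q_d$ does not vanish identically on $K^m$, so that a suitable $v$ exists.
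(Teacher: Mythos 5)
Your proof is correct. Note that the paper does not give a proof of Lemma \ref{L3} at all: it is quoted from \cite[Lemma 3.1]{WangZL} and \cite[Lemma 3.1]{ChenLuoWang}, and your argument is essentially the standard one used there --- pass to the commutative polynomial $q$ obtained by letting the variables commute, restrict to a line through the origin, and use algebraic closedness to solve $g(\lambda)=c$. Two small remarks. First, to conclude $q\neq 0$ you only need the trivial direction (the zero polynomial induces the zero function), not the infinite-field statement you invoke; where infiniteness of $K$ is genuinely needed is in producing $v$ with $q_d(v)\neq 0$. Second, the detour through the top homogeneous component is unnecessary: choosing any $a\in K^m$ with $q(a)\neq 0$ and setting $g(\lambda)=q(\lambda a)$ already gives a univariate polynomial with $g(0)=0$ and $g(1)\neq 0$, hence of degree at least $1$, after which $g(\lambda)-c$ has a root for every $c\in K$. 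Your version costs nothing extra, though; both arguments hinge on the same two facts, namely that the zero constant term forces the restricted polynomial to have positive degree, and that algebraic closedness supplies the root.
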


\begin{lemma}\label{L4}
Let $m\geq 1$ be an integer. Let $p(x_1,\ldots,x_m)$ be a polynomial with zero constant term over an algebraically closed field $K$.
Let $p_{i_1,\ldots,i_k}(z_1,\ldots,z_{m(k+1)})$ be a commutative polynomial of $p$ in (\ref{e2}), where $1\leq i_1,\ldots,i_k\leq m$, $1\leq k\leq n-1$. Suppose that \emph{ord}$(p)=r$, $1\leq r\leq n-1$. We have that
\begin{enumerate}
\item[(i)] $p(K)=\{0\}$;
\item[(ii)] $p_{i_1,\ldots,i_k}(K)=\{0\}$ for all $1\leq i_1,\ldots,i_k\leq m$, where $k=1,\ldots,r-1$ and $r\geq 2$. that is, $p(T_n(K))\subseteq T_n(K)^{(r-1)}$;
\item[(iii)] $p_{i_1,\ldots,i_r}(K)\neq\{0\}$ for some $1\leq i_1,\ldots,i_r\leq m$.
\end{enumerate}
\end{lemma}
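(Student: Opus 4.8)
The statement collects three facts about the ``diagonal'' commutative polynomial $p$ and its ``off-diagonal'' companions $p_{i_1,\ldots,i_k}$ under the hypothesis $\mathrm{ord}(p)=r$. I would prove the three parts essentially by unwinding the definition of order together with Lemma~\ref{L2} and Lemma~\ref{L3}.

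For part (i): since $\mathrm{ord}(p)=r\geq 1$, by definition $p\in\mathcal T(T_1(K))=\mathcal T(K)$, which is precisely the assertion $p(K)=\{0\}$.

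For part (ii): fix $k$ with $1\leq k\leq r-1$ (so $r\geq 2$), and suppose for contradiction that $p_{i_1,\ldots,i_k}(K)\neq\{0\}$ for some tuple $(i_1,\ldots,i_k)$. The idea is to exhibit an evaluation on $T_n(K)$ that makes the $(1,k+1)$-entry (say) nonzero, which would show $p\notin\mathcal T(T_{k+1}(K))$; since $k+1\leq r$ this contradicts $p\in\mathcal T(T_r(K))$, which holds because $\mathrm{ord}(p)=r$. Concretely, work inside the top-left $(k+1)\times(k+1)$ block $T_{k+1}(K)$ sitting inside $T_n(K)$. By Lemma~\ref{L2} the $(1,k+1)$-entry of $p(u_1,\ldots,u_m)$ is
\[
p_{1,k+1}=\sum_{k'=1}^{k}\Bigl(\sum_{\substack{1=j_1<\cdots<j_{k'+1}=k+1\\ 1\le i_1,\ldots,i_{k'}\le m}} p_{i_1\cdots i_{k'}}(\bar a_{j_1j_1},\ldots,\bar a_{j_{k'+1}j_{k'+1}})\,a_{j_1j_2}^{(i_1)}\cdots a_{j_{k'}j_{k'+1}}^{(i_{k'})}\Bigr),
\]
and the only term reaching from column $1$ to column $k+1$ via $k$ strictly increasing indices is the chain $1<2<\cdots<k+1$. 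So one first uses Lemma~\ref{L3}: since $p_{i_1,\ldots,i_k}(K)\neq\{0\}$ and $K$ is algebraically closed, $p_{i_1,\ldots,i_k}(K)=K$, hence we may choose diagonal scalars $\bar a_{11},\ldots,\bar a_{k+1,k+1}$ so that $p_{i_1,\ldots,i_k}(\bar a_{11},\ldots,\bar a_{k+1,k+1})=1$. The delicate point is that the \emph{other} summands of $p_{1,k+1}$ (those with $k'<k$, and those with $k'=k$ but a different index tuple) must be controlled: set all super-diagonal entries except $a_{12}^{(i_1)},a_{23}^{(i_2)},\ldots,a_{k,k+1}^{(i_k)}$ equal to $0$. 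Then every chain of length $k'<k$ from $1$ to $k+1$ would require a ``long jump'' $a_{j_sj_{s+1}}^{(\cdot)}$ with $j_{s+1}-j_s\geq 2$, hence involves an entry that is set to zero and vanishes; and for $k'=k$ the chain is forced to be $1<2<\cdots<k+1$ with the factor $a_{12}^{(i_1)}\cdots a_{k,k+1}^{(i_k)}$, which is the only tuple surviving once we take the off-super-diagonal entries to be $0$ and put the entries $a_{12}^{(i)},\ldots$ to be $0$ for $i\notin\{i_1\},\ldots$ respectively; finally set $a_{12}^{(i_1)}=\cdots=a_{k,k+1}^{(i_k)}=1$. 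This yields $p_{1,k+1}=1\neq0$, the desired contradiction. The ``$p(T_n(K))\subseteq T_n(K)^{(r-1)}$'' tail of (ii) is then immediate: for $t-s\leq r-1$ every commutative polynomial $p_{i_1\cdots i_{k'}}$ appearing in $p_{st}$ has $k'\leq t-s\leq r-1$, so by what we just proved it vanishes on $K$, whence $p_{st}=0$; and the diagonal entries $p(\bar a_{ss})=0$ by part (i).

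For part (iii): since $\mathrm{ord}(p)=r$ we have $p\notin\mathcal T(T_{r+1}(K))$, so there exist $u_1,\ldots,u_m\in T_{r+1}(K)$ with $p(u_1,\ldots,u_m)\neq0$. By parts (i) and (ii), all diagonal entries and all strictly-upper entries within distance $r-1$ of the diagonal vanish on $T_{r+1}(K)$, so the only possibly nonzero entry of $p(u_1,\ldots,u_m)$ in $T_{r+1}(K)$ is the $(1,r+1)$-entry. Hence $p_{1,r+1}\neq0$ for this choice, and by the same chain analysis as above the only surviving summands of $p_{1,r+1}$ are of the form $p_{i_1,\ldots,i_r}(\bar a_{11},\ldots,\bar a_{r+1,r+1})\,a_{12}^{(i_1)}\cdots a_{r,r+1}^{(i_r)}$ (the unique chain $1<2<\cdots<r+1$), so $p_{i_1,\ldots,i_r}(K)\neq\{0\}$ for at least one tuple $(i_1,\ldots,i_r)$.

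**Main obstacle.** The one place requiring care is the bookkeeping in part (ii): making a single prescribed $p_{i_1,\ldots,i_k}$ contribute a nonzero value to the $(1,k+1)$-entry while simultaneously killing all the other contributions (shorter chains and other index tuples) to that entry. This is handled by the sparse choice of evaluation described above — only the entries $a_{12}^{(i_1)},a_{23}^{(i_2)},\ldots,a_{k,k+1}^{(i_k)}$ are nonzero among the super-diagonal variables — so that the only monomial in $p_{1,k+1}$ that can be nonzero is the intended one; everything else is routine, and the algebraic closedness of $K$ enters only through Lemma~\ref{L3} to realize the value $1$ on the diagonal.
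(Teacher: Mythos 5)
Your proposal is correct and follows essentially the same route as the paper: part (i) is the definition of order, part (ii) is proved by the same sparse evaluation (diagonal blocks chosen to make the target $p_{i_1,\ldots,i_k}$ nonzero, the superdiagonal entries $a_{t,t+1}^{(i_t)}$ set to $1$ and all other strictly upper entries to $0$, so the only surviving chain in $p_{1,k+1}$ is $1<2<\cdots<k+1$ with the prescribed indices), and part (iii) is the same reading-off of the $(1,r+1)$ entry in $T_{r+1}(K)$. One small remark: your appeal to Lemma~\ref{L3} to normalize $p_{i_1,\ldots,i_k}$ to the value $1$ is both unnecessary and not strictly licensed (these commutative polynomials need not have zero constant term), but any nonzero value suffices for the contradiction, which is exactly what the paper uses.
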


\begin{proof}
Since $r\geq 1$, we get that the statement (i) holds true. We now claim that the statement (ii) holds true. We assume that
$r\geq 2$. Suppose on the contrary that
\[
p_{i_1,\ldots,i_s}(K)\neq \{0\}
\]
for some $1\leq i_1,\ldots,i_s\leq m$, where $1\leq s\leq r-1$. Then there exist $\bar{b}_{j}\in K^m$, where $j=1,\ldots,s+1$ such that
\[
p_{i_1,\ldots,i_s}(\bar{b}_{1},\ldots,\bar{b}_{s+1})\neq 0.
\]
We take $u_i=(a_{jk}^{(i)})\in T_{s+1}(K)$, $i=1,\ldots,m$, where
\[
\left\{
\begin{aligned}
\bar{a}_{tt}&=\bar{b}_{t},\quad\mbox{for all $t=1,\ldots,s+1$};\\
a_{t,t+1}^{(i_t)}&=1,\quad\mbox{for all $t=1,\ldots,s$};\\
a_{jk}^{(i)}&=0,\quad\mbox{otherwise}.
\end{aligned}
\right.
\]
We get that
\[
p_{1,s+1}=p_{i_1,i_2,\ldots,i_s}(\bar{b}_{1},\ldots,\bar{b}_{s+1})\neq 0.
\]
This implies that $p(T_{s+1}(K))\neq\{0\}$, a contradiction. This proves the statement (ii).

We finally claim that the statement (iii) holds true. Note that $p(T_{1+r}(K))\neq\{0\}$. Thus,
we have that there exist $u_{i}=(a_{jk}^{(i)})\in T_{1+r}(K)$, $i=1,\ldots,m$, such that
\[
p(u_1,\ldots,u_m)=(p_{st})\neq 0.
\]
In view of the statement (ii) we get that
\[
p_{1,r+1}=\sum\limits_{\substack{1=j_1<j_2< \cdots <j_{r+1}=r+1\\1\leq i_1,\ldots,i_r\leq m}}p_{i_1i_2\cdots i_r}(\bar{a}_{j_1j_1},\ldots,\bar{a}_{j_{r+1}j_{r+1}})a_{j_1j_2}^{(i_1)}\cdots a_{j_{r}j_{r+1}}^{(i_r)}\neq 0.
\]
This implies that $p_{i_1,\ldots,i_r}(K)\neq \{0\}$ for some $1\leq i_1,\ldots,i_r\leq m$. This proves the statement (iii). The proof of the result is complete.
\end{proof}

The following technical result will be used in the proof of the main result.

\begin{lemma}\label{L5}
Let $m,n,s$ be integers with $1\leq s\leq n$. Let $p(x_1,\ldots,x_s)$ be a nonzero commutative polynomials over an algebraically closed field $K$.  We have that there exist $a_1,\ldots,a_n\in K$ such that
\begin{eqnarray*}
\begin{split}
p(a_{i_1},\ldots,a_{i_s})\neq 0
\end{split}
\end{eqnarray*}
for all $\{i_1,\ldots,i_s\}\subseteq \{1,\ldots,n\}$.
\end{lemma}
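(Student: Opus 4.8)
The plan is to prove this by a single application of the fact that a nonzero polynomial over an infinite field (hence over the algebraically closed $K$) cannot vanish on all of $K^N$ for any number of variables $N$. First I would reformulate the target. For each choice of indices $(i_1,\ldots,i_s)$ with $\{i_1,\ldots,i_s\}\subseteq\{1,\ldots,n\}$ — note the statement as written allows repetitions, so these are tuples, but the key range is really all $s$-subsets and their orderings — the expression $p(y_{i_1},\ldots,y_{i_s})$ is a polynomial in the $n$ new variables $y_1,\ldots,y_n$. Since $p$ is nonzero, each such $p(y_{i_1},\ldots,y_{i_s})$ is a nonzero polynomial in $K[y_1,\ldots,y_n]$ (substituting variables for variables, even with repetition, cannot kill a nonzero polynomial: pick a monomial of $p$ of top degree in a fixed monomial order and track where it goes). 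Now form the finite product
\[
Q(y_1,\ldots,y_n)=\prod_{\{i_1,\ldots,i_s\}\subseteq\{1,\ldots,n\}}p(y_{i_1},\ldots,y_{i_s}),
\]
which is a product of finitely many nonzero polynomials over the integral domain $K[y_1,\ldots,y_n]$, hence is itself a nonzero polynomial.

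Next I would invoke the standard fact that a nonzero polynomial over an infinite field does not vanish identically on $K^n$; equivalently, the complement of $Z(Q)$ is a nonempty Zariski-open set. Since $K$ is algebraically closed it is infinite, so there exists a point $(a_1,\ldots,a_n)\in K^n$ with $Q(a_1,\ldots,a_n)\neq 0$. Because $K$ is a field (no zero divisors), $Q(a_1,\ldots,a_n)\neq 0$ forces every factor $p(a_{i_1},\ldots,a_{i_s})\neq 0$, which is exactly the conclusion. This is essentially the proof; the only point needing care is the claim that $p(y_{i_1},\ldots,y_{i_s})$ stays nonzero under the (possibly non-injective) index substitution, and that the product is over a finite set — both are immediate, the former by choosing the leading monomial of $p$ under a suitable lex order and observing that distinct monomials of $p$ in the variables $z_1,\ldots,z_s$ map to distinct monomials in $y_1,\ldots,y_n$ only after one restricts attention to, say, the lexicographically largest monomial, whose image cannot be cancelled.

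I do not anticipate a genuine obstacle here; the statement is a routine "generic point avoids a finite union of hypersurfaces" argument. If there is a subtlety, it is purely bookkeeping: making sure the substitution lemma (nonzero polynomial remains nonzero after substituting variables for variables) is stated cleanly, and noting that when $s=n$ the only index set is $\{1,\ldots,n\}$ itself so the claim is just that a nonzero polynomial has a nonvanishing point — the content is entirely in the case $s<n$, where we must simultaneously dodge the images of all the coordinate projections. Packaging everything into the single product $Q$ handles all cases uniformly, so I would present it that way.
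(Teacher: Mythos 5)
Your proof is correct and is essentially the paper's own argument: form the product of $p(x_{i_1},\ldots,x_{i_s})$ over all index choices, observe it is a nonzero polynomial (a finite product of nonzero factors in the integral domain $K[y_1,\ldots,y_n]$), and use the fact that a nonzero polynomial over an infinite field has a nonvanishing point, so that every factor is simultaneously nonzero there.

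One side remark in your write-up is wrong, though it does not affect the lemma as it is actually used. You assert that substituting variables for variables ``even with repetition'' cannot kill a nonzero polynomial; this is false, e.g.\ $p(x_1,x_2)=x_1-x_2$ becomes identically zero under $x_1,x_2\mapsto y_1,y_1$ (your leading-monomial argument fails because the images of distinct monomials can collide and cancel). Consequently, under the literal reading of $\{i_1,\ldots,i_s\}\subseteq\{1,\ldots,n\}$ that permits repeated indices, the lemma itself would be false. The intended (and applied) reading is that $i_1,\ldots,i_s$ are distinct, so each substitution is an injective renaming of variables and each factor is manifestly nonzero; with that understanding your proof, like the paper's, goes through.
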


\begin{proof}
We set
\[
f(x_1,\ldots,x_n)=\prod_{\{i_1,\ldots,i_s\}\subseteq \{1,\ldots,n\}}p(x_{i_1},\ldots,x_{i_s}).
\]
It is clear that $f\neq 0$. In view of \cite[Theorem 2.19]{Jac} we have that there exist $a_1,\ldots,a_n\in K$ such that
\[
f(a_1,\ldots,a_n)\neq 0.
\]
This implies that
\[
p(a_{i_1},\ldots,a_{i_s})\neq 0
\]
for all $\{i_1,\ldots,i_s\}\subseteq \{1,\ldots,n\}$. This proves the result.
\end{proof}

The following well known result comes from \cite[Lemma 2.11]{FK}.

\begin{lemma}\label{L6}
Let $m,t\geq 1$. Let $p_i(x_1,\ldots,x_m)$ be a polynomial over an algebraically closed field $K$, where $i=1,\ldots,t$. Suppose that $p_i(K)\neq\{0\}$ for all $i=1,\ldots,t$. Then there exist $a_1,\ldots,a_m\in K$ such that
\[
p_i(a_1,\ldots,a_m)\neq 0
\]
for all $i=1,\ldots,t$.
\end{lemma}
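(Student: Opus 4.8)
The plan is to reduce the assertion about the $t$ polynomials $p_1,\ldots,p_t$ to the single-polynomial case by passing to their product, exactly as in the proof of Lemma \ref{L5}. First I would observe that the hypothesis $p_i(K)\neq\{0\}$ says precisely that $p_i$ is not the zero element of $K[x_1,\ldots,x_m]$: if $p_i$ were identically zero as a formal polynomial, then $p_i(b_1,\ldots,b_m)=0$ for every $(b_1,\ldots,b_m)\in K^m$, contradicting $p_i(K)\neq\{0\}$. So each $p_i$ is a nonzero polynomial.

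Next, since $K$ is a field, it is an integral domain, and hence so is the polynomial ring $K[x_1,\ldots,x_m]$. Therefore the product $f=p_1p_2\cdots p_t$ of the nonzero polynomials $p_1,\ldots,p_t$ is again a nonzero polynomial. Now, because $K$ is algebraically closed it is in particular infinite, so a nonzero polynomial over $K$ cannot vanish at every point; concretely, by \cite[Theorem 2.19]{Jac} there exist $a_1,\ldots,a_m\in K$ with $f(a_1,\ldots,a_m)\neq 0$. Since $f(a_1,\ldots,a_m)=\prod_{i=1}^t p_i(a_1,\ldots,a_m)$ and $K$ has no zero divisors, each factor $p_i(a_1,\ldots,a_m)$ must be nonzero, which is exactly the desired conclusion.

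I do not expect any real obstacle here: the argument is the multiplicative ``product trick'' already used for Lemma \ref{L5}, and the only points that need to be checked carefully are the (elementary) translation from the hypothesis on values $p_i(K)\neq\{0\}$ to nonvanishing of $p_i$ as a formal polynomial, and the standard input that a nonzero polynomial over an algebraically closed (hence infinite) field admits a point at which it does not vanish.
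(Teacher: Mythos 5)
Your proof is correct: the reduction of $p_i(K)\neq\{0\}$ to $p_i\neq 0$ in $K[x_1,\ldots,x_m]$, the passage to the product $f=p_1\cdots p_t$ (nonzero since the polynomial ring over a field is a domain), and the appeal to the fact that a nonzero polynomial over the infinite field $K$ has a non-vanishing point all go through without difficulty. The paper itself gives no proof, citing \cite[Lemma 2.11]{FK} instead, but your product-trick argument is exactly the standard one and coincides in spirit with the paper's own proof of Lemma \ref{L5}, so there is nothing to object to.
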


We now give the proof of our main result.

\begin{proof}[The proof of Theorem \ref{T1}]
In view of Lemma \ref{L4}(ii) we note that $p(T_n(K))\subseteq T_n(K)^{(r-1)}$. We assume that
\[
T_n(K)^{(r-1)}=K^{d},
 \]
an affine $d$-space over $K$, where $d=\frac{(n-r)(n-r+1)}{2}$. For any $P'\in K^{d}$ and an open subset $U$ of $K^{d}$ with $P'\in U$, we write
 \[
 P'=(a_{1,r+1}',a_{2,r+2}',\ldots,a_{n-r,n}',a_{1,r+2}',\ldots,a_{n-r-1,n}',\ldots,a_{1,n}').
 \]

Since $U$ is an open subset of $K^d$, we have that there exists a subset $T$ of $K[y_1,\ldots,y_d]$ such that
\[
U=K^d\setminus Z(T).
\]
Since $P'\in U$ we have that there exists $f(y_1,\ldots,y_d)\in T$ such that $f(P')\neq 0$. That is,
\begin{equation}\label{e6}
f(a_{1,r+1}',a_{2,r+2}',\ldots,a_{n-r,n}',a_{1,r+2}',a_{2,r+3}',\ldots,a_{n-r-1,n}',\ldots,a_{1,n}')\neq 0.
\end{equation}

For any $u_i=(a_{jk}^{(i)})\in T_n(K)$, $i=1,\ldots,m$, we get from both Lemma \ref{L4}(ii) and (\ref{e2}) that
\begin{equation}\label{eeee1}
p_{st}=\sum\limits_{k=r}^{t-s}\left(\sum\limits_{\substack{s=j_1<j_2<\cdots <j_{k+1}=t\\1\leq i_1,\ldots,i_k\leq m}}
p_{i_1i_2\cdots i_k}(\bar{a}_{j_1j_1},\ldots,\bar{a}_{j_{k+1}j_{k+1}})a_{j_1j_2}^{(i_1)}\cdots a_{j_{k}j_{k+1}}^{(i_k)}\right)
\end{equation}
for all $1\leq s<t\leq n$ with $t-s\geq r$. In view of Lemma \ref{L4}(iii) we have that
\[
p_{i_1',\ldots,i_r'}(K)\neq\{0\},
\]
for some $i_1',\ldots,i_r'\in\{1,\ldots,m\}$. It follows from Lemma \ref{L5} that
there exist $\bar{b}_1,\ldots,\bar{b}_n\in K^m$ such that
\begin{equation}\label{e7}
p_{i_1',\ldots,i_r'}(\bar{b}_{j_1},\ldots,\bar{b}_{j_{r+1}})\neq 0
\end{equation}
for all $\{j_1,\ldots,j_{r+1}\}\subseteq\{1,\ldots,n\}$. Taking
\[
\bar{a}_{jj}=\bar{b}_{j}
\]
in (\ref{eeee1}) for all $j=1,\ldots,n$, we get that
\begin{equation}\label{t1}
p_{st}=\sum\limits_{k=r}^{t-s}\left(\sum\limits_{\substack{s=j_1<j_2<\cdots <j_{k+1}=t\\1\leq i_1,\ldots,i_k\leq m}}
p_{i_1i_2\cdots i_k}(\bar{b}_{j_1},\ldots,\bar{b}_{j_{k+1}})a_{j_1j_2}^{(i_1)}\cdots a_{j_{k}j_{k+1}}^{(i_k)}\right)
\end{equation}
for all $1\leq s<t\leq n$ with $t-s\geq r$.  We set
\[
\bar{U}_{s,t}=\{(j,k)\in U_{s,t}~|~\mbox{$j\neq k$ and $(t-s)-(k-j)\geq r-1$}\}.
\]

It is clear that
\[
p_{st}=\sum\limits_{k=r}^{t-s}\left(\sum\limits_{\substack{s=j_1<j_2<\cdots <j_{k+1}=t\\1\leq i_1,\ldots,i_k\leq m}}
p_{i_1i_2\cdots i_k}(\bar{b}_{j_1},\ldots,\bar{b}_{j_{k+1}})x_{j_1j_2}^{(i_1)}\cdots x_{j_{k}j_{k+1}}^{(i_k)}\right),
\]
where $1\leq s<t\leq n$ with $t-s\geq r$, is a polynomial with zero constant term on the variables
\[
\left\{x_{jk}^{(i)}~|~(j,k)\in \bar{U}_{s,t},i=1,\ldots,m\right\}.
\]

For any $(s,r+s+t)$, where $1\leq s<r+s+t\leq n$, we rewrite (\ref{e6}) as follows:
\begin{equation}\label{e8}
f(a_{1,r+1}',\ldots,a_{s_1,t_1}',a_{s,r+s+t}',a_{s_2,t_2}',\ldots,a_{1,n}')\neq 0.
\end{equation}
We claim that there exist the following compatible subsets of $K$
\begin{eqnarray*}
\begin{split}
V_{1,r+1}&=\left\{a_{jk}^{(i)}\in K~|~(j,k)\in \bar{U}_{1,r+1},i=1,\ldots,m\right\};\\
&\vdots\\
V_{s_1,t_1}&=\left\{a_{jk}^{(i)}\in K~|~(j,k)\in\bar{U}_{s_1,t_1},i=1,\ldots,m\right\};\\
V_{s,r+s+t}&=\left\{a_{jk}^{(i)}\in K~|~(j,k)\in \bar{U}_{s,r+s+t},i=1,\ldots,m\right\}
\end{split}
\end{eqnarray*}
such that
\[
f(p_{1,r+1}(a_{jk}^{(i)}),\ldots,p_{s_1,t_1}(a_{jk}^{(i)}),p_{s,r+s+t}(a_{jk}^{(i)}),a_{s_2,t_2}',\ldots,a_{1,n}')\neq 0.
\]

We prove the claim by induction on $(s,r+s+t)$. We first consider the case of $(1,r+1)$. We take $a_{jk}^{(i)}\in K$ for all $(j,k)\in \bar{U}_{1,r+1}$, $i=1,\ldots,m$, where
\[
\left\{
\begin{aligned}
a_{s,s+1}^{(i_s')}&=1,\quad\mbox{for all $s=1,\ldots,r-1$};\\
a_{r,r+1}^{(i_r')}&=p_{i_1,\ldots,i_r}(\bar{b}_{1},\ldots,\bar{b}_{r+1})^{-1}a_{1,r+1}';\\
a_{jk}^{(i)}&=0,\quad\mbox{otherwise}.
\end{aligned}
\right.
\]

We get from (\ref{t1}) that
\begin{eqnarray*}
\begin{split}
p_{1,r+1}(a_{jk}^{(i)})&=p_{i_1',\ldots,i_r'}(\bar{b}_{1},\ldots,\bar{b}_{r+1})a_{12}^{(i_1')}\cdots a_{r,r+1}^{(i_r')}\\
&=a_{1,r+1}'.
\end{split}
\end{eqnarray*}

It follows from (\ref{e8}) that

\[
f(p_{1,r+1}(a_{jk}^{(i)}),a_{2,r+2}',\ldots,a_{s_1,t_1}',a_{s,r+s+t}',a_{s_2,t_2}',\ldots,a_{1,n}')\neq 0.
\]
This proves the case of $(1,r+1)$. Suppose that $(s,r+s+t)\neq (1,r+1)$. We consider the case of $(s,r+s+t)$. By induction on $(s_1,t_1)$,
we have that there exist the following compatible subsets of $K$
\begin{eqnarray*}
\begin{split}
V_{1,r+1}&=\left\{a_{jk}^{(i)}\in K~|~(j,k)\in \bar{U}_{1,r+1},i=1,\ldots,m\right\};\\
\vdots\\
V_{s_1,t_1}&=\left\{a_{jk}^{(i)}\in K~|~(j,k)\in \bar{U}_{s_1,t_1},i=1,\ldots,m\right\}
\end{split}
\end{eqnarray*}
such that
\begin{equation}\label{e10}
f(p_{1,r+1}(a_{jk}^{(i)}),\ldots,p_{s_1,t_1}(a_{jk}^{(i)}),a_{s,r+s+t}',a_{s_2,t_2}',\ldots,a_{1,n}')\neq 0.
\end{equation}

We now claim that
\[
(r+s-1,r+s+t)\in \bar{U}_{s,r+s+t}\setminus\left(\bar{U}_{1,r+1}\cup\cdots\cup \bar{U}_{s_1,t_1}\right).
\]
Indeed, we note that
\[
((r+s+t)-s)-((r+s+t)-(r+s-1))=r-1.
\]
In view of the definition of $\bar{U}_{s,r+s+t}$ we get that
\[
(r+s-1,r+s+t)\in \bar{U}_{s,r+s+t}.
\]
It suffices to prove that
\[
(r+s-1,r+s+t)\not\in \bar{U}_{1,r+1}\cup\cdots\cup \bar{U}_{s_1,t_1}.
\]

Suppose first that $s=1$. Note that $(s_1,t_1)=(n-r-t+1,n)$. Since
\[
(t_1-s_1)-((r+s+t)-(r+s-1))=r-2,
\]
we get that $(r+s-1,r+s+t)\not\in \bar{U}_{s_1,t_1}$.

For any $\bar{U}_{i,j}\in\{\bar{U}_{1,r+1},\ldots,\bar{U}_{s_1,t_1}\}$,
it is clear that $(t_1-s_1)\geq (j-i)$. We have that
\[
(j-i)-((r+s+t)-(r+s-1))\leq r-2.
\]
This implies that $(r+s-1,r+s+t)\not\in \bar{U}_{i,j}$. We have that
\[
(r+s-1,r+s+t)\not\in \bar{U}_{1,r+1}\cup\cdots\cup \bar{U}_{s_1,t_1}.
\]

Suppose next that $s>1$. Note that $(s_1,t_1)=(s-1,r+s-1+t)$. Since
\[
r+s+t>r+i+t
\]
for all $i=1,\ldots,s-1$, we see that
\[
(r+s-1,r+s+t)\not\in \bar{U}_{1,r+1+t}\cup \cdots \cup \bar{U}_{s_1,t_1}.
\]

Note that
\[
(n-(n-r-t+1))-((r+s+t)-(r+s-1))=r-2.
\]
We get that
\[
(r+s-1,r+s+t)\not\in \bar{U}_{n-r-t+1,n}.
\]

For any $\bar{U}_{i,j}\in\{\bar{U}_{1,r+1},\ldots,\bar{U}_{n-r-t+1,n}\}$, it is clear that
\[
n-(n-r-t+1)\geq j-i.
\]
We get that
\[
(j-i)-((r+s+t)-(r+s-1))\leq r-2.
\]
This implies that
\[
(r+s-1,r+s+t)\not\in \bar{U}_{i,j}.
\]
We have that
\[
(r+s-1,r+s+t)\not\in \bar{U}_{1,r+1}\cup\cdots\cup \bar{U}_{n-r-t+1,n}.
\]
Note that
\[
\{\bar{U}_{1,r+1},\ldots, \bar{U}_{s_1,t_1}\}=\{\bar{U}_{1,r+1},\ldots,\bar{U}_{n-r-t+1,n},\bar{U}_{1,r+1+t},\ldots,\bar{U}_{s_1,t_1}\}.
\]
We obtain that
\[
(r+s-1,r+s+t)\not\in \bar{U}_{1,r+1}\cup\cdots\cup \bar{U}_{s_1,t_1}.
\]
This proves the claim. We have that
\[
\bar{U}_{s,r+s+t}\setminus\left(\bar{U}_{1,r+1}\cup\cdots\cup \bar{U}_{s_1,t_1}\right)\neq\emptyset.
\]

We next claim that
\[
(s,s+1)\in \bar{U}_{s,r+s+t}\cap\left(\bar{U}_{1,r+1}\cup\cdots\cup \bar{U}_{s_1,t_1}\right).
\]

Note that
\[
((r+s+t)-s)-((s+1)-s)=r+t-1\geq r-1.
\]
In view of the definition of $\bar{U}_{s,r+s+t}$ we get that $(s,s+1)\in \bar{U}_{s,r+s+t}$. It suffices to prove that
\[
(s,s+1)\in \bar{U}_{1,r+1}\cup\cdots\cup \bar{U}_{s_1,t_1}.
\]

Suppose first that $s=1$. We note that
\[
(s_1,t_1)=(n-r-t+1,n)
\]
and
\[
\bar{U}_{1,r+t}\in\left\{\bar{U}_{1,r+1},\ldots,\bar{U}_{s_1,t_1}\right\}.
\]
It is clear that $(s,s+1)\in \bar{U}_{1,r+t}$. We get that
\[
(s,s+1)\in \bar{U}_{1,r+1}\cup\cdots\cup \bar{U}_{s_1,t_1}.
\]

Suppose next that $s>1$. Note that
\[
(s_1,t_1)=(s-1,r+s-1+t).
\]
It is clear that $(s,s+1)\in\bar{U}_{s_1,t_1}$. This implies that
\[
(s,s+1)\in \bar{U}_{1,r+1}\cup\cdots\cup \bar{U}_{s_1,t_1}.
\]
This proves the claim. We have that
\[
\bar{U}_{s,r+s+t}\cap\left(\bar{U}_{1,r+1}\cup\cdots\cup \bar{U}_{s_1,t_1}\right)\neq\emptyset.
\]

Let $p_{s,r+s+t}'$ be the sum of all monomials of $p_{s,r+s+t}$ that contains at least an variable $x_{jk}^{(i)}$ with $(j,k)\in \bar{U}_{s,r+s+t}\setminus\left(\bar{U}_{1,r+1}\cup\cdots\cup \bar{U}_{s_1,t_1}\right)$, for some $1\leq i\leq m$. Let
$p_{s,r+s+t}''$ be the sum of all monomials of $p_{s,r+s+t}$ on variables $x_{jk}^{(i)}$
with $(j,k)\in \bar{U}_{s,r+s+t}\cap\left(\bar{U}_{1,r+1}\cup\cdots\cup \bar{U}_{s_1,t_1}\right)$, $i=1,\ldots,m$. It is clear that
\[
p_{s,r+s+t}=p_{s,r+s+t}'+p_{s,r+s+t}''.
\]

Set
\[
\lambda_0=p_{i_1',\ldots,i_r'}(\bar{b}_{s},\ldots,\bar{b}_{r+s-1},\bar{b}_{r+s+t}).
\]
It follows from (\ref{e7}) that $\lambda_0\neq 0$. We take $a_{jk}^{(i)}\in K$ for all $(j,k)\in \bar{U}_{s,r+s+t}$, $i=1,\ldots,m$, where
\[
\left\{
\begin{aligned}
a_{w,w+1}^{(i_w')}&=1,\quad\mbox{for all $w=s,\ldots,r+s-2$};\\
a_{r+s-1,r+s+t}^{(i_{r}')}&=x_{r+s-1,r+s+t}^{(i_{r}')};\\
a_{jk}^{(i)}&=0,\quad\mbox{otherwise}.
\end{aligned}
\right.
\]
We get from (\ref{t1}) that

\begin{eqnarray}\label{e11}
\begin{split}
p_{s,r+s+t}'(a_{jk}^{(i)})’+p_{s,r+s+t}''(a_{jk}^{(i)})&=p_{s,r+s+t}(a_{jk}^{(i)})\\
&=\lambda_0x_{r+s-1,r+s+t}^{(i_r')}.
\end{split}
\end{eqnarray}

Since $(r+s-1,r+s+t)\in \bar{U}_{s,r+s+t}\setminus\left(\bar{U}_{1,r+1}\cup\cdots\cup \bar{U}_{s_1,t_1}\right)$, we get from (\ref{e11}) that
\begin{equation}\label{ee1}
p_{s,r+s+t}'(a_{jk}^{(i)})=\lambda_0x_{r+s-1,r+s+t}^{(i_r')}.
\end{equation}

Take $x_{r+s-1,r+s+t}^{(i_r')}=1$ in (\ref{ee1}). We have that

\begin{equation}\label{e12}
p_{s,r+s+t}'(a_{jk}^{(i)})=\lambda_0\neq 0.
\end{equation}

On the one hand, we fix $a_{jk}^{(i)}\in K$ for all $(j,k)\in \bar{U}_{s,r+s+r}\setminus\left(\bar{U}_{1,r+1}\cup\cdots\cup \bar{U}_{s_1,t_1}\right)$, $i=1,\ldots,m$, in (\ref{e12}).
We set
\[
g_{1}(x_{jk}^{(i)})=p_{s,r+s+t}'(x_{jk}^{(i)})
\]
for all $(j,k)\in \bar{U}_{s,r+s+t}\cap \left(\bar{U}_{1,r+1}\cup\cdots\cup \bar{U}_{s_1,t_1}\right)$, $i=1,\ldots,m$. It follows from (\ref{e12}) that
\[
g_{1}(a_{jk}^{(i)})=\lambda_0\neq 0.
\]
This implies that $g_1\neq 0$.

On the other hand, we fix $a_{jk}^{(i)}\in K$ for all $(j,k)\in \bar{U}_{s,r+s+r}\setminus\left(\bar{U}_{1,r+1}\cup\cdots\cup \bar{U}_{s_1,t_1}\right)$,
$i=1,\ldots,m$, in (\ref{e10}). We set
\[
g_{2}(x_{jk}^{(i)})=f(p_{1,r+1}(x_{jk}^{(i)}),\ldots,p_{s_1,t_1}(x_{jk}^{(i)}),a_{s,r+s+t}',a_{s_2,t_2}',\ldots,a_{1,n}')
\]
for all $(j,k)\in \bar{U}_{s,r+s+t}\cap\left(\bar{U}_{1,r+1}\cup\cdots\cup \bar{U}_{s_1,t_1}\right)$, $i=1,\ldots,m$. It follows from (\ref{e10}) that
\[
g_{2}(a_{jk}^{(i)})\neq 0.
\]
This implies that $g_2\neq 0$. In view of Lemma \ref{L6} we have that there exist $a_{jk}^{(i)}\in K$, for all
$(j,k)\in \bar{U}_{s,r+s+t}\cap\left(\bar{U}_{1,r+1}\cup\cdots\cup \bar{U}_{s_1,t_1}\right)$, $i=1,\ldots,m$, such that
\[
g_{1}(a_{jk}^{(i)})\neq 0\quad\mbox{and}\quad g_{2}(a_{jk}^{(i)})\neq 0.
\]
That is
\begin{equation}\label{e17}
p_{s,r+s+t}'(a_{jk}^{(i)})\neq 0
\end{equation}
and
\begin{equation}\label{e18}
f(p_{1,r+1}(a_{jk}^{(i)}),\ldots,p_{s_1,t_1}(a_{jk}^{(i)}),a_{s,r+s+t}',a_{s_2,t_2},\ldots,a_{1,n}')\neq 0.
\end{equation}

We set
\begin{equation}\label{w}
\alpha=p_{s,r+s+t}''(a_{jk}^{(i)}).
\end{equation}

Fix $a_{jk}^{(i)}\in K$ for all $(j,k)\in \bar{U}_{s,r+s+t}\cap\left(\bar{U}_{1,r+1}\cup\cdots \cup \bar{U}_{s_1,t_1}\right)$, $i=1,\ldots,m$, in (\ref{e17}). We set
\[
g_{3}(x_{jk}^{(i)})=p_{s,r+s+t}'(x_{jk}^{(i)})
\]
for all $(j,k)\in \bar{U}_{s,r+s+t}\setminus\left(\bar{U}_{1,r+1}\cup\cdots\cup \bar{U}_{s_1,t_1}\right)$, $i=1,\ldots,m$. It follows from (\ref{e17}) that
\[
g_{3}(a_{jk}^{(i)})\neq 0.
\]
We have that $g_3$ is a nonzero polynomial with zero constant term. In view of Lemma \ref{L3} we get that $g_{3}(K)=K$. Thus,
we have that there exist $a_{jk}^{(i)}\in K$, $(j,k)\in \bar{U}_{s,r+s+t}\setminus\left(\bar{U}_{1,r+1}\cup\cdots\cup \bar{U}_{s_1,t_1}\right)$, $i=1,\ldots,m$, such that
\[
g_{3}(a_{jk}^{(i)})=a_{s,r+s+t}'-\alpha.
\]
That is
\begin{equation}\label{w3}
p_{s,r+s+t}'(a_{jk}^{(i)})=a_{s,r+s+t}'-\alpha.
\end{equation}

It follows from both (\ref{w}) and (\ref{w3}) that
\begin{eqnarray*}
\begin{split}
p_{s,r+s+t}(a_{jk}^{(i)})&=p_{s,r+s+t}'(a_{jk}^{(i)})+p_{s,r+s+t}''(a_{jk}^{(i)})\\
&=(a_{s,r+s+t}'-\alpha)+\alpha\\
&=a_{s,r+s+t}'.
\end{split}
\end{eqnarray*}

We get from (\ref{e18}) that

\begin{equation}\label{e20}
f(p_{1,r+1}(a_{jk}^{(i)}),\ldots,p_{s_1,t_1}(a_{jk}^{(i)}),p_{s,r+s+t}(a_{jk}^{(i)}),a_{s_2,t_2},\ldots,a_{1,n}')\neq 0.
\end{equation}

We set
\[
V_{s,r+s+t}=\left\{a_{jk}^{(i)}\in K~|~(j,k)\in \bar{U}_{s,r+s+t},i=1,\ldots,m\right\}.
\]
Note that $V_{1,r+1},\ldots V_{s_1,t_1},V_{s,r+s+r}$ are compatible subsets of $K$.  This proves our claim.

Take $(s,r+s+t)=(1,n)$ in (\ref{e20}). We obtain that there exist the following compatible subsets of $K$
\begin{eqnarray*}
\begin{split}
V_{1,r+1}&=\left\{a_{jk}^{(i)}\in K~|~(j,k)\in \bar{U}_{1,r+1},i=1,\ldots,m\right\};\\
&\vdots\\
V_{1,r+2}&=\left\{a_{jk}^{(i)}\in K~|~(j,k)\in \bar{U}_{n-r,n},i=1,\ldots,m\right\};\\
&\vdots\\
V_{1,n}&=\left\{a_{jk}^{(i)}\in K~|~(j,k)\in \bar{U}_{1,n},i=1,\ldots,m\right\}
\end{split}
\end{eqnarray*}
such that
\begin{equation}\label{e21}
f(p_{1,r+1}(a_{jk}^{(i)}),\ldots,p_{1,r+2}(a_{jk}^{(i)}),\ldots,p_{1,n}(a_{jk}^{(i)}))\neq 0.
\end{equation}

We set
\[
c_{jj}^{(i)}=b_j^{(i)}
\]
for all $i=1,\ldots, m$ and $j=1,\ldots,n$ and
\[
c_{jk}^{(i)}=a_{jk}^{(i)}
\]
for all $1\leq j<k\leq n$, $i=1,\ldots,m$. We take
\[
u_i=(c_{jk}^{(i)})\in T_n(K)
\]
for all $i=1,\ldots,m$. It follows from (\ref{e2}) that
\[
p(u_1,\ldots,u_m)=(p_{1,r+1}(a_{jk}^{(i)}),\ldots,p_{1,r+2}(a_{jk}^{(i)}),\ldots,p_{1,n}(a_{jk}^{(i)})).
\]
We get from (\ref{e21}) that
\[
f(p(u_1,\ldots,u_m))\neq 0.
\]
This implies that $p(u_1,\ldots,u_m)\in U$. We have that $p(T_n(K))\cap U\neq \emptyset$. This implies that
$p(T_n(K))$ is a dense subset of $T_n(K)^{(r-1)}$ (with respect to the Zariski topology). This proves the result.
\end{proof}

\end{document}